\newcommand{\rn}{{\mathbb{R}^n}}
\newcommand{\phii}{\varphi}
\newcommand{\pp}{\partial}
\newcommand{\W}{\Omega}
\newcommand{\eps}{\varepsilon}
\def\R{{\mathbb {R}}}
\def\T{{\mathcal {T}}}
\def\V{{\mathbb {V}}}
\newtheorem{remark}[theorem]{Remark}
\title{A fractional Laplace equation:  regularity of solutions and Finite Element approximations
\thanks{This work has been partially supported by CONICET under grant PIP 2014-2016 11220130100184CO}}
\author{Gabriel Acosta \and Juan Pablo Borthagaray  \thanks{ IMAS - CONICET and Departamento de Matem\'a\-tica, FCEyN - Universidad de Buenos Aires, Ciudad Universitaria, Pabell\'on I  (1428) Buenos Aires, Argentina. 
(\email{gacosta@dm.uba.ar}, \email{jpbortha@dm.uba.ar}).}
}
\begin{document} 
\maketitle
\slugger{mms}{xxxx}{xx}{x}{x--x}

\begin{abstract}
This paper deals with the \emph{integral}   version of the Dirichlet homogeneous fractional Laplace equation. For this problem  weighted and fractional Sobolev a priori estimates are provided in terms of the H\"older regularity of the data. By relying on these results, optimal order of convergence for the standard linear finite element method is proved for quasi-uniform as well as graded meshes. Some numerical examples
are given showing results in agreement with the theoretical predictions.
\end{abstract}

\begin{keywords}
Fractional Laplacian, Finite Elements, Weighted Fractional Norms, Graded Meshes
\end{keywords}

\begin{AMS}
65N30,65N15,35B65
\end{AMS}

\pagestyle{myheadings}
\thispagestyle{plain}
\markboth{GABRIEL ACOSTA AND JUAN PABLO BORTHAGARAY}{ESTIMATES FOR THE FE SOLUTION OF A FRACTIONAL LAPLACIAN}

\section{Introduction} \label{sec:intro}
In the last years, the study of nonlocal operators has been an active area of research in different branches of mathematics, and these operators have been employed to model problems in which different length scales are involved. Anomalous diffusion phenomena are ubiquitous in nature \cite{Klafter, MetzlerKlafter}; among the several applications of these nonlocal models let us mention image processing \cite{BuadesColl, GattoHesthaven, GilboaOsher, YifeiZhang}, finance \cite{CarrHelyette, RamaTankov}, electromagnetic fluids \cite{McCayNarasimhan}, peridynamics \cite{Silling} and porous media flow \cite{BensonWheatcraft, CushmanGinn}.

In this work we will be interested in the fractional Laplace operator of order $s$, which we will denote by $(-\Delta)^s$ and simply call the fractional Laplacian. In the theory of stochastic processes, this operator appears as the infinitesimal generator of a stable Lévy process \cite{Bertoin, Valdinoci}.

If the domain under consideration is the whole space $\rn$, then there is a natural way to define it as a pseudodifferential operator of symbol $|\xi|^{2s}$. Indeed, for a function $u$ in the Schwartz class $\mathcal{S}$, let
\begin{equation}
(-\Delta)^s u = \mathcal{F}^{-1} \left( |\xi|^{2s} \mathcal{F} u \right) ,
\label{eq:fourier}
\end{equation}
where $\mathcal{F}$ denotes the Fourier transform. It is expectable for this operator to approximate the usual Laplacian for $s\to1$ and the identity as $s\to0$.

The fractional Laplacian can equivalently be defined by means of the identity \cite{Hitchhikers}
\begin{equation}
(-\Delta)^s u (x) = C(n,s) \mbox{ P.V.} \int_\rn \frac{u(x)-u(y)}{|x-y|^{n+2s}},
\label{eq:fraccionarioyo}
\end{equation}
where the normalization constant 
$$C(n,s) = \frac{2^{2s} s \Gamma(s+\frac{n}{2})}{\pi^{n/2} \Gamma(1-s)} $$
is taken in order to be consistent with definition \eqref{eq:fourier}.

One of the main difficulties arising in the study of this operator is its nonlocality; in order to localize it, Caffarelli and Silvestre \cite{CaffarelliSilvestre} showed that it can be realized as a Dirichlet-to-Neumann operator by means of an extension problem in the half-space $\mathbb{R}^{n+1}_+$.

However, there is more than one way to define the fractional Laplacian on an open bounded set $\W$: 
\begin{enumerate}
	\item One first possibility is to consider fractional powers of the Dirichlet Laplace operator in the sense of spectral theory. Indeed, let $\{\psi_k, \lambda_k \}_{k \in \mathbb{N}} \subset H^1_0 (\W) \times \mathbb{R}_+ $ be the set of normalized eigenfunctions and eigenvalues for the Laplace operator in $\W$ with homogeneous Dirichlet boundary conditions, so that $\{\psi_k\}_{k \in \mathbb{N}}$ is an orthonormal basis of $L^2(\W)$ and 
$$ \left\lbrace
  \begin{array}{c l}
      -\Delta \psi_k = \lambda_k \psi_k  & \mbox{ in }\W, \\
      \psi_k = 0 & \mbox{ on }\partial\W .\\
      \end{array}
    \right.
$$
Then, the \emph{spectral} fractional Laplacian $(-\Delta)^s_S$ is defined for $u \in C^\infty_0(\W)$ by
$$ (-\Delta)^s_S u = \sum_{k=1}^\infty \langle u , \psi_k \rangle \lambda_k^s u_k ,$$
and can be subsequently extended by density to the Hilbert space $H^s(\W)$. In \cite{StingaTorrea}, the Caffarelli-Silvestre result was proved for this operator, thus achieving a local problem posed on a semi-infinite cylinder $\W\times(0,\infty)$. This localization was exploited by Nochetto, Otárola and Salgado in \cite{NochettoOtarola}, 
where the authors study the numerical approximation of the spectral fractional Laplacian by considering graded meshes in the extended variable.

	\item A second feasible definition is attained by considering the integral formulation \eqref{eq:fraccionarioyo}, and restricting it to functions supported in $\W$. This gives rise to the \emph{integral} fractional Laplacian  $(-\Delta)^s_I u$. This operator is different to the spectral fractional Laplacian; for example, their difference is positive definite and positivity preserving \cite{MusinaNazarov}. See also	\cite{ServadeiValdinoci}, where the spectra of these operators are compared. The main difficulties to overcome when dealing with numerical analysis of this integral fractional Laplacian are associated to its nonlocality and to the singularity at $x=y$ of the kernel it involves.
	
	\item Finally, it is also possible to consider a \emph{regional} fractional Laplacian $(-\Delta)^s_R$, in which the integration in \eqref{eq:fraccionarioyo} is restricted to $\W$. This operator is known to be the infinitesimal generator of the so-called censored stable Lévy processes \cite{Bogdan}. 
\end{enumerate}

Throughout this work, we will restrict to the second definition, and for sake of simplicity we will skip the $I$ subindex when denoting it. The analysis we will perform is valid for the regional fractional Laplacian as well. Moreover, it can be straightforwardly extended to operators of the form 
$$ \mathcal{L_K} u (x) = \mbox{P.V.} \int_\rn (u(x)-u(y)) \, K(x-y) \, dy , $$
where the kernel $K:\rn \setminus \{ 0\} \to (0, \infty)$ satisfies
\begin{align*}
& \gamma K \in L^1(\rn) , \mbox{ with } \gamma(x) = \min \{ 1, |x|^2 \}, \\
& \exists \theta > 0, \, s\in(0,1) \mbox{ such that } K(x) \geq \theta |x|^{-(n+2s)}, \, x \in \rn \setminus \{ 0 \}, \\
& K(x) = K(-x), \, x \in \rn \setminus \{ 0 \}. 
\end{align*}

Numerical approximation for the fractional Laplacian on bounded domains has been addressed in the last years. D'Elia and Gunzburger \cite{DEliaGunzburger} exploited the nonlocal vector calculus introduced in \cite{DuGunzburger} in order to perform a study of convergence of certain approximations of the fractional Laplacian as the nonlocal interactions become infinite. Huang and Oberman \cite{HuangOberman} proposed a method which combines finite differences with numerical quadrature, obtained a discrete convolution operator and studied numerically the convergence and order of their method in the $L^\infty(\W)$ norm. However, these methods were implemented only in 1 dimension, and regularity of solutions in those works is assumed as part of the hypotheses.

This work is an attempt to deal with basic 
analytical aspects required to convey a 
complete finite element analysis of the 
following fractional Laplace problem  
\begin{equation}
\left\lbrace
  \begin{array}{l}
      (-\Delta)^s u = f  \mbox{ in }\W, \\
      u = 0 \mbox{ in }\W^c , \\
      \end{array}
    \right.
\label{eq:fraccionario}
\end{equation}
where $0<s<1$ and $(-\Delta)^s u$ denotes the operator defined  in \eqref{eq:fraccionarioyo}.
Our aim is to provide regularity of solutions as well as a priori error estimates for the discrete approximations together with a feasible
finite element implementation. 

From now on  we assume that  $\W \subset \rn$  is a bounded  Lipschitz domain and $f$ a bounded function defined in $\W$.

The fractional Laplace equation in the form \eqref{eq:fraccionario} shares some key analytical
features of the classical Laplacian  making it  amenable - in principle -  to a direct finite
element treatment. Nevertheless,  Sobolev  regularity results for this problem are recent and 
expressed in terms of H\"ormander spaces \cite{Grubb}. Moreover, in that paper the domain
is required to have a $C^\infty$ boundary, which makes its results not entirely
satisfactory for a  FE analysis. Also, some numerical difficulties -such as  
handling the singularity of the kernel- seem to be the main disadvantages that 
have discouraged a direct FE approach. Concerning the latter, we found   that applying rather standard techniques (actually borrowed from the theory of the Boundary Element Method  \cite{SauterSchwab}) together with  an appropriate treatment of the integrals 
involving the unbounded domain $\Omega^c$, accurate FEM solutions 
(at least in 2D) could  be delivered.


In this paper we address Sobolev 
regularity results for \eqref{eq:fraccionario} in less regular domains. 
We provide  weighted fractional
Sobolev a priori estimates in terms of the H\"older regularity of the data
for Lipschitz domains satisfying the exterior ball condition.  
As it is shown in Remark \ref{rem:sharp}, our predicted regularity is sharp. 
The proof relies on recent H\"older regularity results of Ros-Oton and Serra \cite{RosOtonSerra} 
(see Section \ref{sec:regularity}). 
Even though Sobolev-Sobolev (instead Sobolev-H\"older)  results are preferable, this theorem is a key tool within the  FE analysis developed along this work.

The nonlocal nature of problem \eqref{eq:fraccionario} is reflected in the fact that fractional Sobolev norms are not additive respect to the domains. Therefore,
after finding a suitable interpolation operator and getting adequate local
interpolation estimates, some cautions must be taken in order to bound the global interpolation error \cite{Faermann2, Faermann}. In order to deal with graded meshes we extend well known error estimates for the Scott-Zhang interpolation operator to our weighted fractional Sobolev spaces. These estimates are derived by introducing \emph{improved} Poincar\'e inequalities in the fractional setting in a form that we were unable to find in the literature.

The organization of this paper is as follows. Section \ref{sec:setting} is 
devoted to basic definitions and properties of the operator and spaces involved.
In Section \ref{sec:regularity}, the matter of regularity of solutions is 
addressed; it is proved that under some Hölder regularity hypotheses on the 
source $f$, the solutions of the problem under consideration gain half a 
derivative in the Sobolev sense. Moreover, if the source is more regular, sharper
estimates in weighted fractional spaces are shown to hold. Then, in 
Section \ref{ss:fem} interpolation estimates are developed both in standard and
weighted spaces, thus obtaining a priori error estimates for Finite Element (FE) 
solutions of equation \eqref{eq:fraccionario}. Finally, in 
Section \ref{sec:implementation}, some remarks on the implementation are 
discussed and several numerical tests are presented showing results in complete
agreement with our theoretical predictions. More details about the 
implementation in dimension two can be found in the forthcoming paper \cite{ABB}, 
and related results for the fractional eigenvalue problem in \cite{BdPM}.

\subsection*{Acknowledgments} The authors would like to thank L. Del Pezzo and S. Mart\'inez for stimulating discussions on the topic and valuable help by thoroughly reading draft versions of this paper.

\section{Analytic setting}  \label{sec:setting}
In this section we set the notation and review some properties of the spaces involved in the rest of the paper. Let us begin by recalling some function spaces. Throughout this work, $s$ is a parameter such that $0<s<1$ and $\W \subset \rn$ ($n\geq 1$) is a bounded Lipschitz domain.  

\subsection{Function spaces}
The Sobolev space $H^s(\W)$ is defined by
$$ H^s(\W) = \left\{ v \in L^2 (\W) \colon |v|_{H^s(\W)} < \infty \right\} ,$$
where 
$$|v|_{H^s(\W)} = \left(\iint_{\W\times\W} \frac{(v(x)-v(y))^2}{|x-y|^{n+2s}} dx \, dy \right)^{\frac{1}{2}}$$
denotes the Aronszajn-Slobodeckij seminorm. The space $H^s(\W)$ is a Hilbert space, equipped with the norm
$$\| v \|_{H^s(\W)} = \|v \|_{L^2(\W)} + |v|_{H^s(\W)} .$$
Equivalently, the space $H^s(\W)$ may be regarded as the restriction to $\W$ of functions in $H^s(\rn)$. Zero trace spaces ${H^s_0(\W)}$ can be defined as the closure of $C^\infty_c(\W)$ w.r.t. the ${H^s}$ norms. Equivalently, if the boundary of the domain is smooth, they can be defined through real interpolation of spaces by the $K$-method (for example, \cite[Chapter 1]{LionsMagenes}). It is well-known that for
$0< s \le 1/2$, the identity $H_0^s(\W)=H^s(\W)$ holds.

Sobolev spaces of order grater than 1 are defined in the following way: given $k \in \mathbb{N}$, then
$$H^{k+s} (\W) = \left\{ v \in H^k(\W) \colon |D^\alpha v | \in H^s(\W) \, \forall \alpha \mbox{ s.t. } |\alpha| = k \right\} ,$$
furnished with the norm
$$ \| v \|_{H^{k+s} (\W)} = \| v \|_{H^k(\W)} + \sum_{|\alpha| = k } | D^\alpha v |_{H^s(\W)}.$$
In the sequel we assume that $k=0$ or $k=1$, which are the cases of interest along our presentation. 

Let us recall that weighted
Sobolev spaces are  a customary tool for dealing with singular solutions. In the present context we find useful  some fractional and weighted spaces. Our
weights are powers of the distance to the boundary of $\Omega$. Accordingly, we introduce the notation  $\delta : \R^n \to \R_{\ge 0}$, $\delta(x)=d(x, \pp \W)$ and consider the norm    
$$\| v \|_{H^{1+s}_\alpha (\W)}^2 = \| v \|_{H^1 (\W)}^2 + \sum_{|\beta| = 1 } 
\iint_{\W\times\W} \frac{|D^\beta v(x)-D^\beta v(y)|^2}{|x-y|^{n+2s}} \, \delta(x,y)^{2\alpha} dx \, dy $$
where $\alpha \ge 0$, $s \in (0,1)$
and   
$$\delta(x,y) = \min \{ \delta(x), \delta(y) \}.$$ 
In this way we write
$$H^\ell_\alpha (\W) = \left\{ v \in H^1(\W) \colon \| v \|_{H^\ell_\alpha (\W)} < \infty \right\} .$$
Global versions $H^\ell_{\alpha,\W} (\rn)$ are easily obtained  integrating in the whole space $\rn$ and taking $\delta$ as before.  In the sequel we drop the reference to $\W$ in the global case and write $H^\ell_{\alpha} (\rn)$.

\begin{remark}
 \label{rem:pesosA2}
Although  we are interested in the case $ \alpha\ge 0$,  let us recall that in the definition of standard weighted Sobolev spaces $H^k_\alpha(\W)$, with $k$ being a nonnegative integer, arbitary powers of $\delta(x)$    can be considered \cite{Kufner}. On the other hand, for general  weights some restrictions must be taken into account in order to get a right definition of the spaces. A classical family of weights is that of the Muckenhoupt $A_2$  class \cite{Stein}. In the global version
$H^\ell_{\alpha} (\rn)$ we restrict the range of $\alpha$ to $0\le \alpha < 1/2$ in order to have $\delta^\alpha\in A_2$. Moreover, this restriction arises naturally in our results involving graded meshes (see Remark \ref{rem:mallas}).
\end{remark}

Weak solutions of equation \eqref{eq:fraccionario} may be defined by multiplying by a test function and integrating by parts the Laplacian term. Namely, consider the space
$$\V = \{ u \in H^s(\rn) \, \colon \, u = 0 \mbox{ in } \W^c \},$$
equipped with the $H^s(\rn)$ norm. For $\V$
it is known that $C_0^\infty(\Omega)$ is a dense set \cite{FiscellaServadeiValdinoci}. Then, the weak formulation of \eqref{eq:fraccionario} is: find $u \in \V$ such that
\begin{equation}
\frac{C(n,s)}{2} \iint_Q \frac{(u(x)-u(y))(v(x)-v(y))}{|x-y|^{n+2s}} dx \, dy = \int_\W f(x) \, v(x) \, dx
\label{eq:cont}
\end{equation}
for all $v \in \V$, where $Q= (\W \times \rn) \cup (\rn \times \W)$. The integral in the right hand side of the previous equation makes sense if $f$ belongs to the dual space of $\V$, which we denote by $\V^*$. Observe that $(H^s(\rn))^* \subset \V^*$.

At this point we recall for  further reference the following result  \cite{BrennerScott}.  
\begin{proposition}[Poincaré inequality I] \label{prop:poincareII} Let $S$ be an star-shaped domain w.r.t. a ball $B$. For any $u\in H^s(S)$
with $0<s<1$ we call $\bar v=\frac{1}{|S|}\int_Sv$. Then we have 
$$ \| v - \bar v\|_{L^2(S)} \leq c d_S^s |v|_{H^s(S)},$$
with $c$ bounded in terms of $\frac{d_S}{d_B}$,   where $d_S=diam(S)$  and $d_B=diam(B)$.
\end{proposition}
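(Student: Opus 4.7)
The plan is to establish the inequality via the classical Poincar\'e trick: average the pointwise identity $v(x)-\bar v = \frac{1}{|S|}\int_S (v(x)-v(y))\,dy$ and estimate the resulting double integral by inserting the Gagliardo kernel.

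First I would write, for any $x\in S$,
\[
|v(x)-\bar v|^2 \;=\; \left|\frac{1}{|S|}\int_S (v(x)-v(y))\,dy\right|^2 \;\le\; \frac{1}{|S|}\int_S |v(x)-v(y)|^2\,dy,
\]
by Jensen's (or Cauchy--Schwarz) inequality. Integrating in $x$ over $S$ then gives
\[
\|v-\bar v\|_{L^2(S)}^2 \;\le\; \frac{1}{|S|}\iint_{S\times S}|v(x)-v(y)|^2\,dx\,dy.
\]

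Next I would introduce the Gagliardo kernel artificially: multiplying and dividing the integrand by $|x-y|^{n+2s}$ and using the trivial bound $|x-y|\le d_S$ for $x,y\in S$, I get
\[
\|v-\bar v\|_{L^2(S)}^2 \;\le\; \frac{d_S^{n+2s}}{|S|}\iint_{S\times S}\frac{|v(x)-v(y)|^2}{|x-y|^{n+2s}}\,dx\,dy \;=\; \frac{d_S^{n+2s}}{|S|}\,|v|_{H^s(S)}^2.
\]

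Finally, I would use the star-shapedness assumption only through the inclusion $B\subset S$, which yields $|S|\ge |B|=c_n d_B^n$, so that
\[
\frac{d_S^{n+2s}}{|S|} \;\le\; c_n^{-1}\,\Bigl(\frac{d_S}{d_B}\Bigr)^{n} d_S^{2s},
\]
and taking square roots gives the desired inequality with constant controlled in terms of $d_S/d_B$. There is no real obstacle here; the only subtlety is tracking the constant to see the dependence on the chunkiness ratio $d_S/d_B$, which enters exclusively through the lower bound on $|S|$.
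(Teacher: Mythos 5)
Your proof is correct and is essentially identical to the paper's: the same Jensen/Cauchy--Schwarz step to bound $\|v-\bar v\|_{L^2(S)}^2$ by the double integral, the same insertion of the Gagliardo kernel using $|x-y|\le d_S$, and the same lower bound $|S|\ge |B|\sim d_B^n$ to extract the chunkiness dependence.
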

\begin{proof}
We write
$$
\int_S (v-\bar v)^2 dx=\frac{1}{|S|^2}\int_S\left(
\int_S (v(x)-v(y))dy\right)^2dx \le
\frac{1}{|S|}\int_S\int_S\left(
 v(x)-v(y)\right)^2dydx, 
$$
therefore
$$
\int_S (v-\bar v)^2 dx\le \frac{d_S^{n+2s}}{|S|}
\int_S\int_S\frac{\left(
 v(x)-v(y)\right)^2}{|x-y|^{n+2s}}dydx.
$$
Taking into account that $d_B^n\sim |B|\le |S|$,
the Proposition follows. \qquad
\end{proof}

\begin{remark}
Following \cite{BrennerScott} we call $\frac{d_S}{d_B}$ the chunkiness parameter of $S$. 
\end{remark}
Another  well-known result is the following. We include its proof here for completeness. 
\begin{proposition}[Poincaré inequality II] \label{prop:poincare} Given $\W$ as above, there exists a constant $c=c(\W,n,s)$ such that 
$$ \| v \|_{L^2(\W)} \leq c |v|_{H^s(\rn)}  \quad \forall v \in \V $$
\end{proposition}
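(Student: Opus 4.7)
The plan is to exploit the fact that any $v\in\V$ vanishes identically on the (positive-measure) complement of $\W$, so that the value of $v$ at a point $x\in\W$ can be compared to the null values of $v$ on a neighbouring region where the $H^s$-integrand controls the difference.

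More precisely, since $\W$ is bounded I would first fix a ball $B\supset\W$ whose radius is a fixed multiple of $\mathrm{diam}(\W)$, chosen large enough that $E:=B\setminus\overline{\W}$ has strictly positive Lebesgue measure (this is where the dependence of $c$ on $\W$ enters). Set $D:=\mathrm{diam}(B)$, so that $|x-y|\le D$ for every $x\in\W$, $y\in E$.

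The key pointwise step uses that $v(y)=0$ for $y\in E\subset \W^c$: for each $x\in\W$,
\[
|E|\,v(x)^2 \;=\; \int_E (v(x)-v(y))^2\,dy \;\le\; D^{\,n+2s}\int_E\frac{(v(x)-v(y))^2}{|x-y|^{n+2s}}\,dy.
\]
Integrating this inequality over $x\in\W$ and using that $\W\times E\subset \R^n\times\R^n$,
\[
|E|\,\|v\|_{L^2(\W)}^2 \;\le\; D^{\,n+2s}\iint_{\W\times E}\frac{(v(x)-v(y))^2}{|x-y|^{n+2s}}\,dx\,dy \;\le\; D^{\,n+2s}\,|v|_{H^s(\R^n)}^2,
\]
which yields the claim with $c=c(\W,n,s)=D^{(n+2s)/2}/|E|^{1/2}$.

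There is no real obstacle here: the only ingredients are the vanishing of $v$ outside $\W$ and the boundedness of $\W$ (which makes $|x-y|$ uniformly bounded on $\W\times E$). This is precisely what distinguishes $\V$ from $H^s(\W)$, where one would instead need to subtract the mean as in Proposition \ref{prop:poincareII}.
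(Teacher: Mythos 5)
Your proof is correct and follows essentially the same strategy as the paper's: both exploit that $v$ vanishes on $\W^c$ to write $v(x)^2=(v(x)-v(y))^2$ for $y\in\W^c$, and then need a uniform-in-$x$ lower bound on a kernel integral over (part of) $\W^c$. The only difference is in how that lower bound is obtained: the paper invokes Lemma 6.1 of the Hitchhiker's Guide, $\int_{\W^c}|x-y|^{-n-2s}\,dy\geq c(n,s)|\W|^{-2s/n}$, whereas you derive an equivalent bound elementarily by restricting the $y$-integration to a bounded positive-measure set $E\subset\W^c$ and using $|x-y|\leq D$ there, which makes your argument self-contained.
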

\begin{proof}
By Lemma 6.1 of \cite{Hitchhikers}, there exists some constant $c(n,s)>0$ such that for all $x \in \W$,
$$c(n,s) |\W|^{-\frac{2s}{n}} \leq \int_{\W^c} \frac{1}{|x-y|^{n+2s}} dy  .$$
On the other hand, since $v \equiv 0$ on $\W^c$ we know that $v(x)^2 = (v(x)-v(y))^2$ for all $x \in \W, \, y \in \W^c$. So, we obtain
$$c(n,s) |\W|^{-\frac{2s}{n}} \int_\W v(x)^2 dx \leq \iint_{\W\times\W^c} \frac{(v(x)-v(y))^2}{|x-y|^{n+2s}} dx \, dy ,$$
and the Poincaré inequality follows straightforwardly. \qquad
\end{proof}

An immediate consequence of Proposition \ref{prop:poincare} is that the bilinear form $a: \V \times \V \to \mathbb{R}$, 
\begin{equation}
a(u,v) = \frac{C(n,s)}{2} \iint_Q \frac{(u(x)-u(y))(v(x)-v(y))}{|x-y|^{n+2s}} \, dx \, dy
\label{eq:bilineal}
\end{equation}
is coercive. Its continuity is an obvious consequence of the Cauchy-Schwarz inequality. Therefore, it can be easily seen, by applying the Lax-Milgram  theorem, that if $f \in \V^*$ then there exists a unique $u \in \V$ solution of problem \eqref{eq:cont}.

As the $H^s(\rn)$ seminorm is equivalent to the $H^s(\rn)$ norm on $\V$, let us define
$$\| v \|_\V := a(v,v)^\frac12 = \sqrt{\frac{C(n,s)}{2}} \ |v|_{H^s(\rn)} .$$

The approach we shall follow to obtain error estimates is simply to consider an adequate interpolator in a FE space $\V_h$, and estimate the interpolation error. In order to achieve this, it is convenient to understand the relation between the norm in $\V$, and the (semi)norm in $H^s(\W)$.

\begin{proposition}[Hardy-type inequalities, see \cite{Grisvard, Dyda}] Let $\W$ be as above. If $0<s<\frac{1}{2}$, then there exists $c=c(\W,n,s)>0$ such that 
\begin{equation} \int_\W \frac{|u(x)|^2}{\delta(x)^{2s}} \, dx \, \leq c \| u\|_{H^s(\W)}^2 \ \forall \, u \in H^s(\W).
\label{eq:hardychico} \end{equation}
If $\frac{1}{2}<s<1$, then there exists $c=c(\W,n,s)>0$ such that 
\begin{equation*}
\int_\W \frac{|u(x)|^2}{\delta(x)^{2s}} \, dx \, \leq c |u|_{H^s(\W)}^2 \ \forall \, u \in C_0(\W).
\end{equation*}
\end{proposition}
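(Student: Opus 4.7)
For the first inequality ($s<1/2$), my plan is to rely on extension by zero together with a purely geometric estimate for the nonlocal tail. Since the excerpt recalls that $H^s(\W)=H^s_0(\W)$ for $0<s\le 1/2$, the zero extension $\tilde u$ of any $u\in H^s(\W)$ belongs to $H^s(\rn)$ with $\|\tilde u\|_{H^s(\rn)}\le C\|u\|_{H^s(\W)}$. Next I would prove the Lipschitz-domain estimate
\begin{equation*}
\int_{\W^c}\frac{dy}{|x-y|^{n+2s}}\ge c\,\delta(x)^{-2s},\qquad x\in\W,
\end{equation*}
by locating an exterior cone (hence a ball $B(y_0,r)\subset\W^c$ with $r\sim\delta(x)$ and $|x-y_0|\sim\delta(x)$), for which the integral is easily bounded from below. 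Since $\tilde u\equiv 0$ on $\W^c$, one has $|\tilde u(x)-\tilde u(y)|^2=|u(x)|^2$ for $x\in\W$, $y\in\W^c$, and the two ingredients combine to give
\begin{equation*}
\int_\W\frac{|u(x)|^2}{\delta(x)^{2s}}\,dx\le \tfrac{1}{c}\iint_{\W\times\W^c}\frac{|\tilde u(x)-\tilde u(y)|^2}{|x-y|^{n+2s}}\,dy\,dx\le \tfrac{1}{c}|\tilde u|_{H^s(\rn)}^2\le C\|u\|_{H^s(\W)}^2.
\end{equation*}

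For the second inequality ($s>1/2$, $u\in C_0(\W)$) the previous scheme breaks down: the zero extension of $u$ need not lie in $H^s(\rn)$, so I cannot afford to integrate against the exterior domain. Instead I would exploit the \emph{pointwise} vanishing of $u$ on $\pp\W$ via a one-dimensional reduction. Using the Lipschitz character of $\pp\W$, cover a neighborhood of the boundary by finitely many charts in which $\W$ becomes locally a strip $(0,h)\times U$ with $U\subset\R^{n-1}$ and $\delta(x)\sim x_1$. For each fixed $x'\in U$, set $f(t)=u(t,x')$; then $f(0)=0$, and I would invoke the one-dimensional fractional Hardy inequality
\begin{equation*}
\int_0^h \frac{|f(t)|^2}{t^{2s}}\,dt\le C\int_0^h\!\int_0^h \frac{|f(t)-f(\tau)|^2}{|t-\tau|^{1+2s}}\,dt\,d\tau,\qquad s>\tfrac12,
\end{equation*}
which is a standard upgrade of the classical Hardy inequality: writing $f(t)=f(t)-f(0)$ and comparing with the tail $\int_0^t|f(t)-f(\tau)|^2|t-\tau|^{-1-2s}d\tau$ yields the pointwise bound $|f(t)|^2 t^{-2s}\lesssim$ the one-dimensional Gagliardo integrand. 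Integrating in $x'$ and gluing with a partition of unity controls the weighted $L^2$ norm in a boundary neighborhood by $|u|_{H^s(\W)}^2$; in the complementary interior region $\delta$ is bounded away from zero, so $\delta^{-2s}$ is bounded and a Poincar\'e-type argument (using again that $u$ vanishes on $\pp\W$) closes the estimate.

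The main obstacle is clearly the second case. For $s<1/2$ the proof is essentially a one-line computation once $H^s=H^s_0$ is invoked; for $s>1/2$ the absence of a good global extension forces a local analysis in Lipschitz charts and the use of a nontrivial one-dimensional fractional Hardy inequality, where the zero trace condition enters in an essential way.
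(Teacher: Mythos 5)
The paper does not prove this proposition: it is quoted directly from \cite{Grisvard, Dyda} and used as a black box, so there is no internal argument to compare against. Evaluating your attempt on its own merits: the $s>1/2$ part is sound in outline (flattening the Lipschitz boundary in charts, reducing to a one-dimensional fractional Hardy inequality that uses the vanishing $f(0)=0$, integrating tangentially, and gluing by a partition of unity is exactly the classical route in the cited references), though your parenthetical claim of a \emph{pointwise} bound $|f(t)|^2 t^{-2s}\lesssim\int_0^h|f(t)-f(\tau)|^2|t-\tau|^{-1-2s}\,d\tau$ is not correct as stated --- take $f$ rising steeply near $0$ and constant on most of $(0,t)$; the 1D fractional Hardy inequality is an integrated estimate, not a pointwise one.

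The $s<1/2$ part has a genuine gap: it is circular. From the identity $H^s_0(\W)=H^s(\W)$ you conclude that zero extension is bounded from $H^s(\W)$ into $H^s(\rn)$. But that identity is a density statement in the $H^s(\W)$ norm and by itself gives no a priori bound $\|\tilde\phi\|_{H^s(\rn)}\le C\|\phi\|_{H^s(\W)}$ for $\phi\in C_c^\infty(\W)$, which is what you actually need. Moreover, for a Lipschitz domain one has the two-sided estimate $\int_{\W^c}|x-y|^{-n-2s}\,dy\sim\delta(x)^{-2s}$, so
\begin{equation*}
\|\tilde u\|_{H^s(\rn)}^2\;\sim\;\|u\|_{H^s(\W)}^2+\int_\W\frac{|u(x)|^2}{\delta(x)^{2s}}\,dx,
\end{equation*}
and boundedness of the zero extension is therefore \emph{equivalent} to the Hardy inequality \eqref{eq:hardychico} you are trying to prove. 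The paper itself runs the implication in the other, non-circular direction in Corollary \ref{cor:norma} (Hardy $\Rightarrow$ extension bound). To salvage your argument you would need an independent proof of the extension bound that does not secretly pass through Hardy, or you should apply the same local-chart reduction you use for $s>1/2$: for $s<1/2$ the relevant one-dimensional inequality $\int_0^\infty|f(t)|^2 t^{-2s}\,dt\le C\|f\|^2_{H^s(0,\infty)}$ holds for every $f$ with no trace condition, which is exactly why the full $H^s(\W)$ norm, rather than the seminorm, must appear on the right-hand side of \eqref{eq:hardychico}.
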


As a consequence of the previous Proposition we get the following
\begin{corollary} \label{cor:norma} If $0<s<\frac{1}{2}$, then there exists a constant $c=c(\W,n,s)>0$ such that 
\begin{equation*}
\| v \|_\V \leq {c} \| v \|_{H^s(\W)}  \quad \forall v \in \V.
 \end{equation*}
On the other hand, if $\frac{1}{2}<s<1$ there exists a constant ${c}={c}(\W,n,s)>0$ such that  
\begin{equation*}
\| v \|_\V \leq {c} | v |_{H^s({\W})}  \quad \forall v \in \V 
\end{equation*}
\end{corollary}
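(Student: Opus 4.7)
The plan is to decompose the $H^s(\R^n)$ seminorm according to the partition $\R^n = \W \cup \W^c$, exploit the boundary condition $v = 0$ on $\W^c$, and then invoke the Hardy inequalities just stated to absorb the non-local tail into $\|v\|_{H^s(\W)}$ or $|v|_{H^s(\W)}$.

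More concretely, the first step is to write
\[
|v|_{H^s(\R^n)}^2 = \iint_{\W\times\W} \frac{(v(x)-v(y))^2}{|x-y|^{n+2s}}\,dx\,dy + 2\iint_{\W\times\W^c} \frac{(v(x)-v(y))^2}{|x-y|^{n+2s}}\,dx\,dy,
\]
since $v$ vanishes outside $\W$ and the integrand therefore vanishes on $\W^c\times\W^c$. Using $v(y)=0$ for $y\in\W^c$, the mixed term collapses to $2\int_\W v(x)^2 \bigl(\int_{\W^c} |x-y|^{-n-2s}\,dy\bigr)\,dx$.

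The second step is to bound the inner integral in terms of the distance to the boundary. For $x\in\W$, one has $\W^c\subset\{y\in\R^n:|y-x|\geq\delta(x)\}$, so passing to polar coordinates gives
\[
\int_{\W^c} \frac{dy}{|x-y|^{n+2s}} \;\leq\; \int_{|y-x|\geq\delta(x)} \frac{dy}{|x-y|^{n+2s}} \;=\; \frac{c_n}{2s}\,\delta(x)^{-2s}.
\]
Combining these two steps,
\[
|v|_{H^s(\R^n)}^2 \;\leq\; |v|_{H^s(\W)}^2 \,+\, C(n,s)\int_\W \frac{v(x)^2}{\delta(x)^{2s}}\,dx.
\]

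The third step is to apply the Hardy inequality \eqref{eq:hardychico} when $0<s<1/2$: the second term is controlled by $c\|v\|_{H^s(\W)}^2$, so $|v|_{H^s(\R^n)}^2\le c\|v\|_{H^s(\W)}^2$, which is exactly the desired estimate after recalling $\|v\|_\V^2=\frac{C(n,s)}{2}|v|_{H^s(\R^n)}^2$. When $1/2<s<1$, the corresponding Hardy inequality needs $v\in C_0(\W)$; since $C_0^\infty(\W)$ is dense in $\V$ (as noted right after the definition of $\V$), we establish the estimate first for test functions and then pass to the limit, obtaining $\int_\W v^2/\delta^{2s}\leq c|v|_{H^s(\W)}^2$ and hence $|v|_{H^s(\R^n)}^2\le c|v|_{H^s(\W)}^2$.

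I do not anticipate a real obstacle: the only technical point is the elementary tail estimate for the inner integral in step two, and in the $s>1/2$ case the mild density argument needed to apply the Hardy inequality. Both are routine.
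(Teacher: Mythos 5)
Your proof is correct and follows essentially the same route as the paper: split the $H^s(\mathbb{R}^n)$ seminorm over $\W\times\W$ and $\W\times\W^c$, use $v=0$ on $\W^c$ to reduce the mixed term to $\int_\W v^2\int_{\W^c}|x-y|^{-n-2s}\,dy\,dx$, bound the inner integral by $c\,\delta(x)^{-2s}$ via the inclusion $\W^c\subset B(x,\delta(x))^c$, and conclude with the Hardy inequality, using density of $C_0^\infty(\W)$ in $\V$ for the case $s>1/2$. No discrepancies.
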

\begin{proof}
Let us prove the first inequality; the second one follows in the same fashion  recalling that
we can assume  $v\in C_0^\infty(\W)$ \cite{FiscellaServadeiValdinoci} and concluding
by density arguments. 

Let $v \in \V$, then, since $v = 0$ in $\W^c$,
\begin{align*} 
\|v\|_\V^2 & = \\
& = \frac{C(n,s)}{2} \iint_{\W\times \W} \frac{|v(x)-v(y)|^2}{|x-y|^{n+2s}} dx \, dy  + C(n,s) \iint_{\W\times\W^c} \frac{|v(x)|^2}{|x-y|^{n+2s}} dx dy \\
& \leq c(n,s) \left[|v|^2_{H^s(\W)} + \int_\W |v(x)|^2 \int_{B(x, \delta(x))^c} \frac{1}{|x-y|^{n+2s}} \, dy \, dx \right] \\
& = c(n,s) \left[ |v|_{H^s(\W)}^2 + \int_\W \frac{|v(x)|^2}{\delta(x)^{2s}} \, dx \right]. 
\end{align*}
Applying the Hardy inequality \eqref{eq:hardychico}, the first estimate follows. \qquad
\end{proof}

\section{Sobolev regularity} \label{sec:regularity}
The purpose of this section is to provide regularity estimates for solutions of \eqref{eq:fraccionario} in terms of fractional Sobolev norms. We start by reviewing some key results given in \cite{RosOtonSerra}.

\begin{theorem}
[See Prop. 1.1 in \cite{RosOtonSerra}]
\label{teo:ROS}
If $\W$ is a bounded, Lipschitz domain satisfying the exterior ball condition and $f\in L^\infty(\W)$, then any solution $u$ of \eqref{eq:fraccionario} belongs to $C^s(\rn)$ and 
\begin{equation} \|u\|_{C^s(\rn)} \leq C(\W,s) \|f\|_{L^\infty(\W)}. \label{eq:holder} \end{equation}
\end{theorem}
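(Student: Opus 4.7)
The aim is a global $C^s$ bound up to the boundary. My plan has three ingredients: (i) an $L^\infty$ bound on $u$ in terms of $\|f\|_{L^\infty(\W)}$, (ii) the boundary decay $|u(x)| \leq C\|f\|_{L^\infty(\W)}\delta(x)^s$ for $x \in \W$, and (iii) standard interior Hölder regularity for $(-\Delta)^s$ on small balls. The substance of the theorem sits in step (ii); this is where the exterior ball condition is indispensable, since without it one cannot expect the correct boundary exponent $s$.

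For (i) I would invoke the weak maximum principle on $\V$ and compare $u$ with a multiple of the explicit torsion function on a ball $B_R \supset \W$, namely $w(x)=\kappa_{n,s}(R^2-|x|^2)_+^s$, which satisfies $(-\Delta)^s w = 1$ in $B_R$ with $w\equiv 0$ outside (a classical computation of Getoor). This yields $\|u\|_{L^\infty(\rn)} \leq C(\W,s)\|f\|_{L^\infty(\W)}$.

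For (ii), given $x_0 \in \pp\W$ I invoke the exterior ball condition to pick $B_r(y_0) \subset \W^c$ tangent at $x_0$. A natural barrier is obtained from the function $\phi(x)=\bigl((|x-y_0|^2 - r^2)_+\bigr)^s$, which vanishes on $\pp B_r(y_0)$ and for which $(-\Delta)^s\phi$ admits an explicit lower bound via Riesz-potential identities. I would modify $\phi$ (e.g.\ by adding a bump supported away from $x_0$) to produce $\psi\geq 0$ on $\rn$ with $\psi(x) \leq C\delta(x)^s$ near $x_0$ and $(-\Delta)^s\psi \geq \|f\|_{L^\infty(\W)}$ on $\W \cap B_\rho(x_0)$ for some $\rho>0$ independent of $x_0$. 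The comparison principle, applied to $\pm u$ and $\|f\|_{L^\infty(\W)}\psi$, then forces $|u(x)|\leq C\|f\|_{L^\infty(\W)}\delta(x)^s$ in that neighborhood; combined with (i), the same bound holds globally in $\W$.

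Finally, known interior Hölder estimates, applied on the ball $B_{\delta(x)/2}(x) \subset \W$ after rescaling, yield $[u]_{C^s(B_{\delta(x)/2}(x))} \leq C\bigl(\|u\|_{L^\infty(\rn)}+\delta(x)^{2s}\|f\|_{L^\infty(\W)}\bigr)$. To assemble the full $C^s(\rn)$ seminorm I would split pairs $(x,y)$ in three regimes: (a) $|x-y|\leq \tfrac12\min\{\delta(x),\delta(y)\}$ with $x,y\in\W$, handled by the interior estimate; (b) $|x-y|\geq \tfrac12\min\{\delta(x),\delta(y)\}$ with $x,y\in\W$, handled by the boundary decay via $|u(x)|+|u(y)|\lesssim (\delta(x)^s+\delta(y)^s)\|f\|_{L^\infty(\W)}\lesssim |x-y|^s\|f\|_{L^\infty(\W)}$; (c) one of the points outside $\W$, which reduces to (b) after replacing it by its nearest boundary projection, since $u\equiv 0$ on $\W^c$. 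The main obstacle throughout is step (ii): producing the sharp exponent $s$ rather than some smaller $\alpha$ requires a barrier whose fractional Laplacian matches the order of the operator, and this delicate matching is precisely what the exterior ball condition makes possible.
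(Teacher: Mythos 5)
The paper does not prove this theorem; it simply cites Proposition 1.1 of \cite{RosOtonSerra}, so there is no in-house argument to compare against. That said, your three-step plan --- an $L^\infty$ bound via a torsion-function barrier, boundary decay $|u|\lesssim \delta^s$ from an exterior-ball supersolution, then interior H\"older estimates assembled by a distance-to-boundary case split --- is exactly the skeleton of Ros-Oton and Serra's proof (their Lemmas 2.6 and 2.7, Proposition 2.3, and the concluding assembly).

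There is, however, a genuine technical slip in your step (iii). Rescaling the interior estimate from the unit ball to $B_{\delta(x)/2}(x)$ introduces a factor $\delta(x)^{-s}$ in front of the $L^\infty$ contribution, so the correct scaled form reads
\[
[u]_{C^s(B_{\delta(x)/2}(x))} \leq C\Bigl(\delta(x)^{-s}\|u\|_{L^\infty(\rn)} + \delta(x)^{s}\|f\|_{L^\infty(\W)}\Bigr),
\]
and the first term blows up as $\delta(x)\to 0$, so your version (without the $\delta(x)^{-s}$) cannot come from a pure rescaling. The fix --- and it is how Ros-Oton and Serra actually close the argument, see their Lemma 2.9, which is quoted in the present paper as estimate \eqref{eq:lema29} --- is to feed the boundary decay from your step (ii) back into the interior estimate: on $B_{\delta(x)/2}(x)$ one has $\|u\|_{L^\infty}\lesssim \delta(x)^s\|f\|_{L^\infty(\W)}$, and a cutoff-plus-tail argument (their Corollary 2.4) lets one replace the global $\|u\|_{L^\infty(\rn)}$ by this local bound together with a controlled far-field term. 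Only then does one get the uniform bound $[u]_{C^s(B_{\delta(x)/2}(x))}\leq C\|f\|_{L^\infty(\W)}$ that your regime (a) actually needs. In short, the plan is correct, but step (iii) is not a mere citation of ``standard'' interior regularity; it is precisely the step where the boundary decay of (ii) must be re-injected, and glossing over that is what makes your written bound inconsistent with scaling.
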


Moreover, if $f$ is Hölder continuous, estimates for higher order Hölder norms of $u$ can be obtained. 
For $0<\beta$, we denote by $|\cdot|_{C^\beta(\W)}$ the $C^\beta(\W)$ seminorm. For $\theta \geq - \beta$, let us write $\beta = k + \beta'$ with $k$ integer and $\beta' \in (0,1]$. 
Following \cite{RosOtonSerra} we define the seminorm 
$$|w|_{\beta}^{(\theta)} = \sup_{x,y \in \W} \delta(x,y)^{\beta+\theta} \frac{|D^kw(x) - D^k w(y)|}{|x-y|^{\beta'}},$$
and the associated norm $\| \cdot \|_{\beta}^{(\theta)}$ in the following way: for $\theta \geq 0$,
$$\| w \|_\beta^{(\theta)} = \sum_{\ell=0}^k \left( \sup_{x \in \W} \delta(x)^{\ell+\theta} |D^\ell w(x)| \right)+ |w|_{\beta}^{(\theta)},$$
while for $-\beta<\theta<0$,
$$\| w \|_\beta^{(\theta)} = \| w \|_{C^{-\theta}(\W)} + \sum_{\ell=1}^k \left( \sup_{x \in \W} \delta(x)^{\ell+\theta} |D^\ell w(x)| \right) + |w|_\beta^{(\theta)} .$$
It holds,
\begin{theorem}[See Prop. 1.4 in \cite{RosOtonSerra}]
\label{teo:ROS2} Let $\W$ be a bounded domain and $\beta>0$ be such that neither $\beta$ nor $\beta+2s$ is an integer. Let $f \in C^{\beta}(\W)$ be such that $\|f\|_\beta^{(s)} < \infty$, and $u \in C^s(\rn)$ be a solution of \eqref{eq:fraccionario}. Then, $u\in C^{\beta+2s}(\W)$ and 
$$ \| u \|_{\beta + 2s }^{(-s)} \leq C(\W,s,\beta) \left( \|u\|_{C^s(\rn)} + \| f \|_{\beta}^{(s)} \right) .$$
\end{theorem}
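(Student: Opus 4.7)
The plan is to reduce this boundary-weighted Schauder-type estimate to the \emph{interior} Schauder estimate for the fractional Laplacian on $\R^n$, which asserts that if $(-\Delta)^s w = g$ on the unit ball $B_1$ with $g \in C^\beta(B_1)$ and $w \in C^s(\R^n)$, then $w \in C^{\beta+2s}(B_{1/2})$ with a quantitative bound controlled by $\|w\|_{C^s(\R^n)} + \|g\|_{C^\beta(B_1)}$. This localized estimate is a well-established fact (going back to Silvestre and subsequent works) and is really the heart of the matter; the weighted norms $\|\cdot\|_{\beta}^{(\theta)}$ are tailor-made to transcribe the local estimate into a global bound that tolerates blow-up of derivatives near $\partial\W$.

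First I would fix an arbitrary $x_0 \in \W$, set $r = \delta(x_0)/2$, and rescale by introducing $v(y) = u(x_0 + r y)$ for $y \in \R^n$. Since the fractional Laplacian is $2s$-homogeneous under dilations, $(-\Delta)^s v(y) = r^{2s}\, f(x_0 + r y)$ on $B_1$. Two a priori bounds must be tracked: the global $C^s(\R^n)$ norm of $v$ is trivially bounded by $\|u\|_{C^s(\R^n)}$; and on $B_r(x_0) \subset \W$ one has $\delta(\cdot,\pp\W) \ge r$, so the very definition of $\|f\|_\beta^{(s)}$ yields $r^{\beta + s}|f|_{C^\beta(B_r(x_0))} \le C \|f\|_\beta^{(s)}$ and analogous bounds for the lower-order terms, giving $\|r^{2s} f(x_0 + r \cdot)\|_{C^\beta(B_1)} \le C\, r^{s-\beta}\|f\|_\beta^{(s)}$.

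Applying the interior estimate to $v$ on $B_{1/2}$ and then undoing the rescaling produces, for every integer $\ell$ with $1 \le \ell \le k = \lfloor \beta + 2s\rfloor$, a pointwise inequality of the shape
$$
r^{\ell}\,|D^\ell u(x_0)| \le C\, r^{s}\,\bigl(\|u\|_{C^s(\R^n)} + \|f\|_\beta^{(s)}\bigr),
$$
together with an analogous Hölder-type bound at the top order. Rearranging gives $\delta(x_0)^{\ell - s}|D^\ell u(x_0)| \le C\,(\|u\|_{C^s(\R^n)} + \|f\|_\beta^{(s)})$; for the top-order seminorm one uses that $\delta(x,y) \lesssim \delta(x_0)$ for $x,y$ in a ball of radius $r/2$ around $x_0$ (plus a standard chaining argument when $x,y$ are far apart, combined with the already-obtained control on $|D^k u|$) to handle $\delta(x,y)^{\beta + s}$ times the Hölder quotient of $D^k u$. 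Taking the supremum over $x_0$ and summing over $\ell$ assembles $\|u\|_{\beta+2s}^{(-s)}$.

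The hard part is really the interior estimate, whose proof must separate the principal part of the operator from the nonlocal tail $\int_{\R^n\setminus B_1}(v(y)-v(z))|y-z|^{-n-2s}\,dz$: the tail is where the global $C^s(\R^n)$ norm enters, and its $C^\beta$ regularity as a function of $y \in B_{1/2}$ is obtained by differentiating under the integral sign, exploiting that the kernel singularity is now away from the domain of integration. The noninteger hypotheses on $\beta$ and $\beta + 2s$ ensure we remain in standard Hölder (not Zygmund) spaces throughout and avoid logarithmic losses in the freezing-of-coefficients argument; once the local estimate is in hand, the passage to the weighted global estimate is essentially bookkeeping of scaling exponents.
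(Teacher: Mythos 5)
The paper does not prove this theorem: it is quoted verbatim as Proposition~1.4 of Ros-Oton and Serra \cite{RosOtonSerra}, with no proof supplied here. So there is no ``paper's own proof'' to compare against, and the relevant comparison is with the argument in that reference, which your sketch does indeed track (rescale around each interior point by $r=\delta(x_0)/2$, invoke an interior Schauder estimate, undo the scaling, assemble the weighted norm).

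That said, the sketch has a scaling slip and glosses over the one place the argument is delicate. The rescaled right-hand side $g(y)=r^{2s}f(x_0+ry)$ satisfies $\|g\|_{C^\beta(B_1)}\lesssim r^{s}\|f\|_\beta^{(s)}$, not $r^{s-\beta}\|f\|_\beta^{(s)}$: the $C^\beta$ seminorm of $g$ is $r^{2s+\beta}|f|_{C^\beta(B_r(x_0))}$, and $\delta\ge r$ on $B_r(x_0)$ gives $|f|_{C^\beta(B_r(x_0))}\le r^{-\beta-s}|f|_\beta^{(s)}$, so the $r^\beta$ factors cancel. More importantly, the interior estimate you invoke cannot be used with the full $C^s(\rn)$ norm on the right; the $L^\infty(\rn)$ part of $\|v\|_{C^s(\rn)}$ does not scale, so undoing the dilation at order $\ell$ would produce $r^{-\ell}\|u\|_{L^\infty(\rn)}$ and hence $\delta(x_0)^{-s}\|u\|_{L^\infty(\rn)}$ in the weighted bound, which blows up at the boundary. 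To close the argument one must use the interior estimate in a form whose right-hand side scales like $r^s$, namely with the $C^s(\rn)$ \emph{seminorm} $|v|_{C^s(\rn)}=r^s|u|_{C^s(\rn)}$. This is legitimate because one may subtract the constant $v(0)$ (which changes neither $(-\Delta)^s v$, nor any $D^\ell v$ with $\ell\ge1$, nor $|v|_{C^s(\rn)}$) and then control the nonlocal tail by $|v(z)-v(0)|\le |v|_{C^s(\rn)}|z|^s$, which is integrable against $|z|^{-n-2s}$; but this subtraction step is exactly what is missing from the sketch, and without it the rescaling does not give the claimed inequality $r^\ell|D^\ell u(x_0)|\lesssim r^s(\|u\|_{C^s(\rn)}+\|f\|_\beta^{(s)})$. (The zero-order piece $\|u\|_{C^s(\W)}$ of $\|u\|_{\beta+2s}^{(-s)}$, which the seminorm-form interior estimate cannot see, must then be bounded separately, but that is trivial from $u\in C^s(\rn)$.)
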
 
In the next remarks we explore some consequences of the previous theorems written in a way useful
in the sequel.
\begin{remark}[Case $0<s<\frac{1}{2}$]
 \label{rem:rossmenorunmedio}
 Taking $\beta \in (0, 1-2s)$ in Theorem \ref{teo:ROS2}, we get that there exists a constant $C(\W,s,\beta)$ such that
\begin{equation}
\sup_{x, y \in \W} \delta(x,y)^{\beta+s} \, \frac{|u(x)-u(y)|}{|x-y|^{\beta+2s}} \leq C \left( \| f \|_{L^\infty(\W)} + \| f \|_{\beta}^{(s)} \right).
\label{eq:estimacion}
\end{equation}
Moreover, since $\beta < 1$, for $f \in C^{\beta}(\W)$ it is simple to prove that
\begin{equation*}
\| f\|_{\beta}^{(s)} \leq C(\W,s) \|f\|_{C^{\beta}(\W)} .
\end{equation*}
\end{remark}

\begin{remark}[Case $\frac{1}{2}<s<1$]
 \label{rem:rossmayorunmedio}
Considering $\beta\in (0,2-2s)$, Theorem \ref{teo:ROS2} 
implies that 
$$ \sup_{x, y \in \W} \delta(x,y)^{\beta+s} \, \frac{|Du(x)-Du(y)|}{|x-y|^{\beta+2s-1}} \leq C\left(\W, s, \beta, \| f \|_{\beta}^{(s)} \right),$$
and
$$
\sup_{x\in \W} \delta(x)^{1-s} |Du(x)|  \leq C\left(\W, s, \beta, \| f \|_{\beta}^{(s)} \right).
$$
\end{remark}
In the remainder of this section we show how to use these results to bound Sobolev norms of $u$.
In order to do that it is useful to divide $\W\times\W$ into a set in which the distance between $x$ and $y$ is bounded below by $\delta(x,y)$ and a set in which $|x-y|$ is smaller than that. Roughly, for the first set, Hölder regularity of the solution is enough to control the integrand involved in fractional seminorms of $u$, as this region is away from the diagonal. As for the second one, since the weight involving $|x-y|$ is singular at $y=x$, some extra term is required in order to control its growth; this is obtained by means of Theorem \ref{teo:ROS2}. 

It is convenient to observe that, given a function $v:\W\times\W \to \R$ such that $v(x,y) = v(y,x)$ for all $x,y \in \W$, the integral of $v$ over $\W\times\W$ equals $2$ times its integral over the set
\begin{equation}
 \label{eq:a}
 A = \{ (x,y) \in \W\times\W \, \colon \, \delta(x,y) = \delta(x) \}.
\end{equation}

 We make use of the decomposition mentioned in the previous paragraph by defining 
\begin{equation}
 \label{eq:b}
B= \{ (x,y) \in A \, \colon \, |x-y| \geq \delta(x) \}.
\end{equation}

\begin{remark}
At this point, let us recall an useful identity regarding integrability of powers of the distance to the boundary function. The following holds whenever $\alpha < 1$:
\begin{equation}
\int_\W \delta(x)^{-\alpha} dx = \mathcal{O}\left( \frac{1}{1-\alpha} \right)
\label{eq:integraborde}
\end{equation}
See, for example, the proof of Lemma 2.14 in \cite{Mazya}.
\end{remark}

\subsection{Regularity in standard fractional spaces \texorpdfstring{$(0<s<1/2)$}{Lg}}
We are now ready to prove:
\begin{proposition} \label{prop:schico}
Let $0<s<\frac{1}{2}$ and $f\in C^{\frac{1}{2}-s}(\W)$. Then, for every $\eps >0$, the solution $u$ of \eqref{eq:cont} belongs to $H^{s+\frac{1}{2}-\eps}(\W)$, with
$$|u|_{H^{s+\frac{1}{2}-\eps}(\W)} \leq \frac{C(\W,s,n)}{\eps} \, \|f\|_{C^{\frac12-s}(\W)}.$$
\end{proposition}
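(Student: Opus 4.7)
\textbf{Proof plan for Proposition \ref{prop:schico}.} The strategy is to combine two H\"older estimates for $u$: the global one from Theorem \ref{teo:ROS} (good far from the diagonal, where the boundary weight is not helpful) and the interior (weighted) one from Theorem \ref{teo:ROS2}/Remark \ref{rem:rossmenorunmedio} (which controls $u$ near the diagonal at the cost of a power of $\delta(x,y)^{-1/2}$).

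First, I would choose $\beta=\frac12-s$ in Remark \ref{rem:rossmenorunmedio}; since $0<s<\tfrac12$, $\beta\in(0,1-2s)$ and neither $\beta$ nor $\beta+2s=\tfrac12+s$ is an integer, so the hypotheses of Theorem \ref{teo:ROS2} are met. This yields, with $M:=C(\W,s)\|f\|_{C^{1/2-s}(\W)}$,
\[
|u(x)-u(y)|\le M\,\frac{|x-y|^{s+\frac12}}{\delta(x,y)^{1/2}}\qquad\forall\,x,y\in\W.
\]
In parallel, Theorem \ref{teo:ROS} gives $|u(x)-u(y)|\le M\,|x-y|^{s}$.

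Next I would exploit the symmetry in $(x,y)$ to reduce the computation of $|u|^2_{H^{s+1/2-\eps}(\W)}$ to twice the integral over the set $A$ in \eqref{eq:a}, on which $\delta(x,y)=\delta(x)$. Then I would split $A=B\cup(A\setminus B)$ with $B$ as in \eqref{eq:b}, so that the kernel power is $n+1+2s-2\eps$. On $B$ (away from the diagonal) I would plug in the global H\"older estimate, obtaining the inner integral
\[
\int_{\{|x-y|\ge\delta(x)\}\cap\W}\frac{dy}{|x-y|^{n+1-2\eps}}\lesssim \frac{\delta(x)^{2\eps-1}}{1-2\eps},
\]
and then apply \eqref{eq:integraborde} with $\alpha=1-2\eps$ to conclude a bound of order $\eps^{-2}$ on the squared seminorm. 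On $A\setminus B$ (close to the diagonal) I would use the weighted estimate, which produces
\[
\iint_{A\setminus B}\frac{M^2}{\delta(x)\,|x-y|^{n-2\eps}}\,dx\,dy\lesssim\frac{M^2}{2\eps}\int_\W \delta(x)^{2\eps-1}\,dx,
\]
again estimated by \eqref{eq:integraborde}, with the same $\eps^{-2}$ blow-up. Taking square roots yields the claimed $C/\eps$ factor.

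The main obstacle is the bookkeeping of the singularity as $\eps\to 0$: one power of $\eps^{-1}$ is inevitable from the radial integral against $r^{2\eps-1}$ on a small ball (or, on the complementary region, from $r^{2\eps-2}$ near $\delta(x)$), and a second power of $\eps^{-1}$ comes from \eqref{eq:integraborde} since the resulting boundary weight $\delta^{2\eps-1}$ is just barely integrable. The weighted H\"older estimate from Remark \ref{rem:rossmenorunmedio} is essential precisely because the naive bound $|u(x)-u(y)|\lesssim|x-y|^s$ is too weak near the diagonal to produce any gain beyond $s$; the extra factor $|x-y|^{1/2}$ gained at the cost of $\delta(x)^{-1/2}$ is exactly what lets the diagonal integrand be integrated, once one accepts the loss of $\eps^{-1}$ coming from the singular boundary weight.
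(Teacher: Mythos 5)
Your approach is essentially identical to the paper's: the same reduction to the set $A$ by symmetry, the same splitting into $B$ and $A\setminus B$ by comparing $|x-y|$ with $\delta(x)$, the same choices $\beta=\tfrac12-s$ and $\theta=s+\tfrac12-\eps$, and the same two H\"older estimates (global from Theorem \ref{teo:ROS} on $B$, weighted from Remark \ref{rem:rossmenorunmedio} on $A\setminus B$). One small inaccuracy that does not affect the result: on $B$ the inner radial integral only costs a bounded factor $(1-2\eps)^{-1}$, so the contribution from $B$ to the squared seminorm is in fact $\mathcal{O}(\eps^{-1})$; the full $\eps^{-2}$ blow-up comes only from $A\setminus B$, exactly as in the paper.
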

\begin{proof}
Take $\theta \in (s,1)$ and consider the splitting of $A$ mentioned before.  Then, applying estimate \eqref{eq:holder},
\begin{align*}
\iint_{B}\frac{|u(x)-u(y)|^2}{|x-y|^{n+2\theta}} & dx \, dy \leq \\ 
& \leq C(\W,s) \|f\|_{L^\infty(\W)}^2 \int_{\W} \int_{B(x,\delta(x))^c} |x-y|^{-n-2\theta+2s} dy \, dx \\
& \leq \frac{C(\W,s,n) \|f\|_{L^\infty(\W)}^2}{\theta-s} \int_\W \delta(x)^{2(s-\theta)} dx .
\end{align*}

A necessary and sufficient condition for the finiteness of the right hand side in the previous inequality is that $\theta < s + \frac{1}{2}$.

On the other hand, assume $f \in C^\beta(\W)$ for some $\beta > 0$. In a similar fashion the application of inequality \eqref{eq:estimacion} yields
\begin{align*}
\iint_{A\setminus B} \frac{|u(x)-u(y)|^2}{|x-y|^{n+2\theta}}& dx \, dy \, \leq \\ 
\leq & \, C \int_\W \delta(x)^{-2(\beta+s)} \left(\int_{B(x,\delta(x))}  |x-y|^{-n-2\theta+2\beta+4s} dy \right) dx .
\end{align*}
Now, the integral over $B(x,\delta(x))$ is finite if and only if $\beta + 2s > \theta$. So, in this case we obtain
\begin{equation}
\iint_{A\setminus B} \frac{|u(x)-u(y)|^2}{|x-y|^{n+2\theta}} dx \, dy \, \leq C \int_\W \delta(x)^{2(s-\theta)} dx ,
\label{eq:estchico} \end{equation} 
where in the end the constant is of the form 
$$C=\frac{C(\W,s,n,\beta)}{\beta+2s-\theta} \| f \|_{C^\beta(\W)}^2 .$$
Once again, the integral in the right hand side of \eqref{eq:estchico} is finite if and only if $\theta < s + \frac{1}{2}$. If $\beta = \frac{1}{2}-s$, choosing $\theta = s + \frac{1}{2}- \eps$, we find
$$\iint_{A\setminus B} \frac{|u(x)-u(y)|^2}{|x-y|^{n+2\theta}} dx \, dy \, \leq \frac{C(\W,s,n)}{\eps} \int_\W \delta(x)^{-1+2\eps} dx .$$
Since the integral in the right hand side is $\mathcal{O}(\eps^{-1})$ (recall identity \eqref{eq:integraborde}), the proof is concluded. \qquad
\end{proof}

\begin{remark} \label{remark}
If $f$ is more regular than $C^{\frac12 -s}(\W)$, then no further gain of regularity from estimate \eqref{eq:estimacion} is possible by means of the technique of the previous proof. This is indeed sharp, see Remark \ref{rem:sharp}. The matter is that the parameter $\beta$ disappears in inequality \eqref{eq:estchico}.
\end{remark}

\subsection{Regularity in fractional weighted spaces \texorpdfstring{$(1/2<s<1)$}{Lg}}
\label{sub:fws}
Next we show that an analogous of Proposition \ref{prop:schico} is possible for $1/2<s<1$ and hence $1/2$ derivative can be gained in the a priori estimate.  Moreover, along the proof of this result it becomes clear that
the singular behavior of the solution can be localized near the boundary.
Therefore, introducing appropriate weights we find
alternative regularity results that can be used to build a priori adapted meshes. 

Before proceeding let us notice that  the expected gain of $1/2$ derivative would imply that the solution belongs at least to $H^1(\W)$. This fact can be proved studying the  behavior of the fractional seminorms $|\cdot|_{H^{1-\eps}({\W})}$ as $\eps \to 0$. 

Let us recall this useful result proved in \cite{BourgainBrezisMironescu}:

\begin{proposition} \label{prop:BBM}
Assume $v \in L^p (\W)$, $1<p<\infty$. Then, 
\begin{equation}
\lim_{\eps \to 0} \ \eps |v|^p_{W^{1-\eps,p}({\W})} = C(n,p) |v|^p_{W^{1,p}({\W})} .
\label{eq:BBM} \end{equation}
\end{proposition}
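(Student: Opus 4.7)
The plan is to follow the Bourgain-Brezis-Mironescu strategy: a radial change of variables reduces the double integral to a nested integral in $r$ and $h\in S^{n-1}$; the factor $\eps$ then converts a diverging one-dimensional integral in $r$ into a probability measure concentrating at $r=0$; and a Taylor-type expansion on a dense class recovers the classical gradient, with extension by density to all of $W^{1,p}$.

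Writing $y = x + rh$ with $r>0$ and $h \in S^{n-1}$, so that the Jacobian $r^{n-1}$ cancels $n-1$ powers of $|x-y|^{-1}$ in the kernel, one obtains
\begin{equation*}
|v|^p_{W^{1-\eps,p}(\W)} = \int_\W \int_{S^{n-1}} \int_0^{R(x,h)} \frac{|v(x+rh)-v(x)|^p}{r^{1+p-\eps p}}\,dr\,dh\,dx,
\end{equation*}
with $R(x,h)=\sup\{r>0: x+rh\in\W\}$. Fix $\delta_0\in(0,\mathrm{diam}(\W))$ and split the radial integral at $r=\delta_0$. The tail $r\ge\delta_0$ is bounded, after $|a-b|^p\le 2^{p-1}(|a|^p+|b|^p)$ and Fubini, by $C\|v\|_{L^p(\W)}^p\,\delta_0^{-p+\eps p}/(p-\eps p)$, which is $O(1)$ in $\eps$; multiplication by $\eps$ kills this contribution. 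Hence only the near-diagonal region $r<\delta_0$ contributes to the limit.

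For $v \in C_c^\infty(\rn)$, the fundamental theorem of calculus gives $v(x+rh)-v(x) = r\int_0^1 \nabla v(x+trh)\cdot h\,dt$, so the near-diagonal integral is
\begin{equation*}
\eps \int_0^{\delta_0} r^{\eps p-1} G(r)^p\,dr, \qquad G(r)=\Bigl|\int_0^1 \nabla v(x+trh)\cdot h\,dt\Bigr|.
\end{equation*}
Substituting $u=r^{\eps p}$ rewrites this as $\tfrac1p \int_0^{\delta_0^{\eps p}} G\bigl(u^{1/(\eps p)}\bigr)^p du$; as $\eps\to0$ the domain of integration tends to $(0,1)$ and $u^{1/(\eps p)}\to 0$ pointwise, so dominated convergence (using uniform continuity of $\nabla v$) delivers the limit $\tfrac1p|\nabla v(x)\cdot h|^p$. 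Integrating in $x\in\W$ and $h\in S^{n-1}$ and exploiting spherical symmetry yields the explicit constant $C(n,p) = \tfrac1p \int_{S^{n-1}} |e_1\cdot h|^p\,dh$.

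The main obstacle is the passage from smooth to arbitrary $v\in L^p(\W)$. If $v \in W^{1,p}(\W)$, one approximates by $v_\eta\in C^\infty$ in the $W^{1,p}$ norm; the key technical ingredient is a uniform-in-$\eps$ bound $\eps |w|^p_{W^{1-\eps,p}(\W)} \le C\|w\|^p_{W^{1,p}(\W)}$ valid for all $w\in W^{1,p}$ (itself a delicate Hardy/interpolation-type inequality), which together with the smooth-case identity and Fatou's lemma yields the equality in general. The converse direction---that $v\in L^p\setminus W^{1,p}$ forces the left-hand side to diverge---is more subtle: a uniform bound on $\eps|v|^p_{W^{1-\eps,p}}$ forces the difference quotients $(v(\cdot+h)-v)/|h|$ to remain bounded in $L^p(\W)$ as $h\to 0$, and for $p>1$ this characterizes membership in $W^{1,p}$ by weak compactness, contradicting the assumption. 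It is precisely here that the hypothesis $p>1$ is indispensable.
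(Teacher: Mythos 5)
The paper does not give a proof of this proposition; it simply cites \cite{BourgainBrezisMironescu}, where the result (and the more general mollifier version from which the fractional-seminorm case follows) was established. What you have written is essentially a reconstruction of the Bourgain--Brezis--Mironescu argument, so there is no competing approach inside the paper to compare against.

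Your sketch is structurally faithful to that argument: the polar change of variables, the $\delta_0$-split isolating the near-diagonal region, the substitution $u = r^{\eps p}$ converting $\eps\,r^{\eps p - 1}\,dr$ into a probability measure concentrating at $r=0$, and the spherical averaging that produces $C(n,p) = \tfrac1p\int_{S^{n-1}}|e_1\cdot h|^p\,dh$ are all as in the original. Two points need tightening before this is a proof. First, the uniform bound $\eps|w|^p_{W^{1-\eps,p}(\W)} \le C\|w\|^p_{W^{1,p}(\W)}$ is not a ``delicate Hardy/interpolation-type inequality''; it falls out of the very splitting you have already set up, by applying Jensen (or Minkowski's integral inequality) to $\int_0^1\nabla w(x+trh)\cdot h\,dt$ on the near-diagonal piece and the $2^{p-1}$-bound plus Fubini on the tail. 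Saying so closes the density step cleanly rather than leaving it as an opaque appeal. Second, and more substantively, your converse step claims that boundedness of $\eps|v|^p_{W^{1-\eps,p}(\W)}$ forces boundedness of the pointwise-in-$h$ difference quotients $\|(v(\cdot+h)-v)/|h|\|_{L^p(\W')}$; this is a genuine leap, since the fractional quantity is an \emph{average} over all displacements and does not directly control any single one. In \cite{BourgainBrezisMironescu} the converse is obtained by a different mechanism: one shows that the distributional gradient is represented by an $L^p$ function (or a finite measure when $p=1$) via a weak compactness argument on a family of averaged difference quotients, after a slicing step. As written you would need to either supply that slicing argument or cite the Nikolskii-type characterization of $W^{1,p}$ explicitly; the role of $p>1$ you flag is precisely that weak compactness of bounded sets in $L^p$ fails at $p=1$, where one only recovers $BV$. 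A small technical remark: you should take $\delta_0\le 1$ (or account separately for $u\in(1,\delta_0^{\eps p})$, a set of measure $O(\eps)$) so that $u^{1/(\eps p)}\to 0$ pointwise on the entire domain of integration after the substitution.
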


In first place, we want to prove that for the solution $u$ of \eqref{eq:cont}, the left hand side of \eqref{eq:BBM} remains bounded as $\eps \to 0$, so that it belongs to $H^1({\W})$. For that purpose, we require the following local Hölder regularity estimate, (see \cite{RosOtonSerra}, Lemma 2.9):
\begin{lemma}
If $f \in L^\infty(\W)$ and $\gamma \in (0,2s)$, then $u$ verifies
\begin{equation}
|u|_{C^\gamma (\overline{B_R (x)})} \leq C R^{s-\gamma} \| f \|_{L^\infty(\W)} \ \forall x \in \W,
\label{eq:lema29} \end{equation}
where R = $\frac{\delta(x)}{2}$ and the constant C depends only on $\W, s$ and $\gamma$, and blows up only when $\gamma \to 2s$.
\end{lemma}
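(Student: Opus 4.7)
The plan is to combine the global $C^s$ estimate supplied by Theorem \ref{teo:ROS} with a rescaling argument and an interior Schauder-type estimate for $(-\Delta)^s$. It is natural to split into two regimes depending on whether $\gamma \leq s$ or $\gamma > s$.

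For $\gamma \in (0, s]$ no interior theory is needed. By Theorem \ref{teo:ROS} we have $\|u\|_{C^s(\rn)} \leq C\|f\|_{L^\infty(\W)}$, and since any two points $y, y' \in B_R(x)$ satisfy $|y-y'| \leq 2R$, a trivial Hölder interpolation gives
$$\frac{|u(y)-u(y')|}{|y-y'|^\gamma} = \frac{|u(y)-u(y')|}{|y-y'|^s}\,|y-y'|^{s-\gamma} \leq (2R)^{s-\gamma}[u]_{C^s(\rn)} \leq C R^{s-\gamma}\|f\|_{L^\infty(\W)},$$
which is the desired estimate.

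For $\gamma \in (s, 2s)$ I would rescale. Since $R = \delta(x)/2$, one has $B_{2R}(x) \subset \W$, so the equation $(-\Delta)^s u = f$ holds throughout $B_{2R}(x)$. Setting $\tilde u(z) := u(x+Rz)$ and $\tilde f(z) := R^{2s} f(x+Rz)$, a change of variables in the singular integral defining $(-\Delta)^s$ yields $(-\Delta)^s \tilde u = \tilde f$ in $B_2(0)$, together with $\|\tilde f\|_{L^\infty(B_2)} \leq R^{2s}\|f\|_{L^\infty(\W)}$ and $[\tilde u]_{C^s(\rn)} = R^s [u]_{C^s(\rn)} \leq C R^s \|f\|_{L^\infty(\W)}$. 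An interior Schauder estimate for the fractional Laplacian of the form
$$[\tilde u]_{C^\gamma(\overline{B_1})} \leq C\bigl([\tilde u]_{C^s(\rn)} + \|\tilde f\|_{L^\infty(B_2)}\bigr),$$
with $C$ blowing up only as $\gamma \to 2s$, then produces $[\tilde u]_{C^\gamma(\overline{B_1})} \leq C(R^s + R^{2s})\|f\|_{L^\infty(\W)} \leq C R^s\|f\|_{L^\infty(\W)}$ once we use $R \leq \mathrm{diam}(\W)$. Unwinding the scaling, $[u]_{C^\gamma(\overline{B_R(x)})} = R^{-\gamma}[\tilde u]_{C^\gamma(\overline{B_1})} \leq C R^{s-\gamma}\|f\|_{L^\infty(\W)}$, completing the argument.

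The main obstacle is justifying the sharp factor $R^{s-\gamma}$ rather than the weaker $R^{-\gamma}$ one would get by naively applying Silvestre's interior regularity (which customarily has $\|\tilde u\|_{L^\infty(\rn)}$ on the right). One needs the refined version in which the $C^s(\rn)$ seminorm of the solution appears instead; this can be obtained by subtracting the constant $u(x)$ and handling carefully the tail of the nonlocal operator acting on a function with at most $|z|^s$ growth at infinity, or alternatively via a blow-up/compactness argument. Once this interior estimate is in place, the scaling bookkeeping above is routine.
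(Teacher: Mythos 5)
Note first that the paper does not prove this lemma; it is imported verbatim from Ros-Oton and Serra \cite{RosOtonSerra} (their Lemma~2.9), and the paper simply cites it. Your strategy --- pass to $\tilde u(z)=u(x+Rz)$ so that the equation holds in $B_2(0)$, feed the global $C^s(\rn)$ bound of Theorem~\ref{teo:ROS} into a translation-invariant interior Schauder estimate, and unwind the scaling --- is essentially how the lemma is proved in the source, so your approach matches. The ``main obstacle'' you flag is exactly the real content: one needs an interior estimate with $[\tilde u]_{C^s(\rn)}$ (or, equivalently after subtracting the constant $u(x)$, a weighted tail term of the form $\int_{\rn}|\tilde u(z)|(1+|z|)^{-n-2s}\,dz$) on the right instead of $\|\tilde u\|_{L^\infty(\rn)}$. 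That refined form is established in Section~2 of \cite{RosOtonSerra}, and the subtraction-plus-tail argument you sketch is the standard way to obtain it: after subtracting $u(x)$ one has $|\tilde u(z)|\le R^s[u]_{C^s(\rn)}|z|^s$, and $\int_{\rn}|z|^s(1+|z|)^{-n-2s}\,dz<\infty$ because $s<2s$. If you want your proof to be self-contained you would need to either cite that interior estimate explicitly or reproduce its proof; as written, that step remains a gesture. The split into $\gamma\le s$ and $\gamma>s$ is correct but unnecessary --- once the interior estimate is in hand the rescaling argument covers the full range $\gamma\in(0,2s)$ at once --- though it is a harmless and reasonable sanity check.
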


The mentioned $H^1$ regularity follows from the previous results, and its proof can be obtained  like the one of Proposition \ref{prop:schico}.

\begin{lemma} \label{lema:H1}
If $1/2<s<1$ and $f\in L^\infty(\W)$, then a solution $u$ of \eqref{eq:cont} belongs to $H^1({\W})$ and therefore to $H^1(\rn)$. Moreover, it satisfies
$$|u|_{H^1(\W)} \leq \frac{C(\W,s,n) \| f\|_{L^\infty(\W)}}{(1-s)(2s-1)},$$
where the constant $C(\W,s,n)$ is uniformly bounded for all $s \in (1/2,1)$.
\end{lemma}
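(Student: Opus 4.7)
The plan is to mimic Proposition \ref{prop:schico}, but since the target regularity is exactly $H^1$ (a borderline case) I have to route through Proposition \ref{prop:BBM}: it suffices to exhibit a uniform bound of the form $\eps\,|u|^2_{H^{1-\eps}(\W)} \leq C\|f\|_{L^\infty(\W)}^2/((1-s)(2s-1))$ as $\eps\to 0^+$, since the limit in \eqref{eq:BBM} then delivers $u\in H^1(\W)$ together with the estimate.

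Set $\theta = 1-\eps$ with $\eps$ small. After halving via the symmetry $x\leftrightarrow y$ and restricting to the region $A$ of \eqref{eq:a}, I would split $A$ into three pieces according to the relative size of $|x-y|$ and $\delta(x)$: the ``far'' set $B$ from \eqref{eq:b}; the ``very close'' set $D_1=\{(x,y)\in A \,:\, |x-y|<\delta(x)/2\}$; and the ``intermediate'' set $D_2=A\setminus(B\cup D_1)$. On $B\cup D_2$, where $|x-y|\ge \delta(x)/2$, I would use the global H\"older estimate \eqref{eq:holder} of Theorem \ref{teo:ROS}, so that spherical integration of $|x-y|^{2s-n-2\theta}$ over $\{y:|x-y|\ge \delta(x)/2\}$ produces a factor $\delta(x)^{2(s-\theta)}/(\theta-s)$. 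On $D_1$ I would apply the local H\"older estimate \eqref{eq:lema29} with an exponent $\gamma\in(\theta,1)$ --- such a $\gamma$ exists because $\theta<1<2s$, and it keeps the constant in \eqref{eq:lema29} away from its blow-up at $\gamma=2s$ --- giving $|u(x)-u(y)|\leq C\|f\|_{L^\infty(\W)}\delta(x)^{s-\gamma}|x-y|^\gamma$ for $y\in B(x,\delta(x)/2)$; integrating $|x-y|^{2\gamma-n-2\theta}$ on $B(x,\delta(x)/2)$ contributes $\delta(x)^{2(\gamma-\theta)}/(\gamma-\theta)$, which combines with $\delta(x)^{2(s-\gamma)}$ to produce exactly the same factor $\delta(x)^{2(s-\theta)}$.

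Summing the three contributions, everything reduces to controlling $\int_\W \delta(x)^{2(s-\theta)}\,dx$, which by \eqref{eq:integraborde} is $\mathcal{O}((2s-1+2\eps)^{-1})$ --- finite precisely because $s>1/2$. The natural choice $\gamma = (\theta+1)/2 = 1-\eps/2$ yields $\gamma-\theta = \eps/2$, so the $D_1$ bound scales like $1/\eps$ while the $B$ and $D_2$ bounds stay bounded as $\eps\to 0^+$; multiplying by $\eps$ the singular $1/\eps$ is exactly absorbed, and passing to the limit via \eqref{eq:BBM} yields $u\in H^1(\W)$ with the advertised estimate. For the inclusion $u\in H^1(\rn)$: Theorem \ref{teo:ROS} gives $u \in C^s(\rn)$ with $u\equiv 0$ on $\W^c$, so its boundary trace vanishes and the zero extension preserves $H^1$-regularity.

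The hardest step is the precise bookkeeping of constants: the $(2s-1)^{-1}$ factor emerges cleanly from \eqref{eq:integraborde}, but the $(1-s)^{-1}$ factor and the uniformity of $C$ in $s\in(1/2,1)$ must be traced through the $s$-dependence of the constants in Theorem \ref{teo:ROS} and estimate \eqref{eq:lema29} (both degenerate as $s\to 1$). A secondary technical point is that Proposition \ref{prop:BBM} as stated gives one direction of the BBM equality (assuming a priori membership in $H^1$); concluding $H^1$-membership from the uniform bound on $\eps\,|u|^2_{H^{1-\eps}(\W)}$ relies on the now-standard converse direction (essentially D\'avila's refinement), which should be invoked explicitly.
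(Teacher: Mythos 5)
Your proposal is essentially the same as the paper's proof: both route through Bourgain--Brezis--Mironescu (Proposition \ref{prop:BBM}), split the integration domain at $|x-y|=\delta(x)/2$, apply the global H\"older estimate \eqref{eq:holder} on the far region, and apply the local estimate \eqref{eq:lema29} with an exponent $\gamma=1-\eps/2$ on the near region (the paper parametrizes this as $\gamma=1-C(\eps)$ with $C(\eps)=\mathcal{O}(\eps)$). Your three-way split into $B$, $D_1$, $D_2$ collapses to the paper's two-way split since you treat $B\cup D_2$ identically; your only misattribution is that the $(1-s)^{-1}$ factor comes not from degeneration of the H\"older constants but from the spherical integration $\int_{\delta(x)/2}^\infty \rho^{-1-2(1-\eps)+2s}\,d\rho \sim (1-\eps-s)^{-1}\delta(x)^{2(s-1+\eps)}$ on the far region, and your BBM caveat, while prudent, is already covered by the original theorem holding with both sides possibly infinite.
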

\begin{proof}
 Take $\eps \in (0, 1-s)$ and in the same fashion as before consider the sets $A$ and $B$, with the slight difference of a $\frac{\delta(x)}{2}$ instead of a $\delta(x)$ in the definition of the latter. Taking $\gamma = 1 - C(\eps)$ for some $0< C(\eps) < \eps$ to be chosen, applying estimate \eqref{eq:lema29} and proceeding as in the proof of Proposition \ref{prop:schico}, it follows
$$\iint_{A \setminus B} \frac{|u(x)-u(y)|^2}{|x-y|^{n+2(1-\eps)}} \, dy \, dx \leq \frac{C(\W,s,n) \|f\|^2_{L^\infty(\W)}}{\eps - C(\eps)} \int_\W \delta(x)^{2(s-1+\eps)} dx .$$
Observe that the constant $C$ in the previous inequality remains bounded for $s \in (1/2,1)$, and that the integral is $\mathcal{O}\left((2s-1+2\eps)^{-1}\right)$.

On the other hand, taking into account the global Hölder regularity of $u$ it is immediate to obtain
$$\iint_{B} \frac{|u(x)-u(y)|^2}{|x-y|^{n+2(1-\eps)}} \, dy \, dx \leq \frac{C(\W,s,n) \|f\|^2_{L^\infty(\W)}}{1-s+\eps} \int_\W \delta(x)^{2(s-1+\eps)} dx .$$
Combining the previous estimates, we get
$$|u|^2_{H^{1-\eps}(\W)} \leq \frac{C(\W,s,n) \|f\|^2_{L^\infty(\W)}}{(\eps - C(\eps))(1-s+\eps)(2s-1+\eps)},$$
where the constant $C(\W,s,n)$ remains bounded for $s \in (1/2,1)$. Taking $C(\eps)$ such that $\eps - C(\eps) = \mathcal{O}(\eps)$, the desired conclusion follows thanks to Proposition \ref{prop:BBM}. \qquad
\end{proof}

Next, we require some regularity on $Du$. Let $\beta \in (0, 2-2s)$ and assume that $f \in C^\beta (\W)$. 
Consider the subsets $A$ and $B$ of $\W \times\W$ as before (see eqs. \eqref{eq:a} and 
 \eqref{eq:b}) and introduce the weighted integral
$$
I:=\iint_{A \setminus B} \frac{|Du(x)-Du(y)|^2}{|x-y|^{n+2(\ell-1)}} \, \delta(x,y)^{2\alpha} dx \, dy.
$$
Using the first inequality of Remark \ref{rem:rossmayorunmedio} we explore how  to take
the involved parameters $\ell$ and $\alpha$ in order to keep $I$ bounded.  

$$ I\leq C \int_\W \left(\int_{B(x,\delta(x))} |x-y|^{2(\beta + 2s - 1)-n-2(\ell-1)} \,dy \right) \delta(x)^{2(\alpha - \beta - s)} dx \leq $$
$$\leq \frac{C}{\beta + \ell - 2s} \int_\W \delta(x)^{2(\alpha + s -\ell)} dx \leq \frac{C}{(\beta + \ell - 2s)(1+2(\alpha - s - \ell))} ,$$  
where, in order to ensure the convergence of the integrals involved, we must require
\begin{equation}
\label{eq:cotasReg}
\ell - \beta < 2s \ \mbox{ and } \ \ell < \alpha + s + 1/2.
\end{equation}
On the other hand, for
$$
II:=\iint_{B} \frac{|Du(x)-Du(y)|^2}{|x-y|^{n+2(\ell-1)}} \, \delta(x,y)^{2\alpha} dx \, dy,
$$
again due to Remark \ref{rem:rossmayorunmedio},
$$II\leq C \int_\W \left(\int_{B(x,\delta(x))^c} |x-y|^{-n-2(\ell-1)} dy \right) \delta(x)^{2(\alpha + s - 1)} dx \leq $$
$$\leq C \int_\W \delta(x)^{2(\alpha + s -\ell)} dx \leq \frac{C}{1+2(\alpha - s - \ell)} ,$$
where the condition for the finiteness of $II$ is guaranteed 
if we restrict our attention to \eqref{eq:cotasReg}.
Under these conditions, we have proved:
\begin{equation}\label{eq:conysinpeso}
|Du|_{H^{\ell}_\alpha (\W)} \leq \frac{C}{(\beta + \ell - 2s)(1+2(\alpha - s - \ell))} .
\end{equation}

Within the range provided in  \eqref{eq:cotasReg} we can highlight
some cases of interest. In the same spirit of Proposition \ref{prop:schico}, we have, considering $\alpha=0$ and $\ell=s+1/2-\varepsilon$ in \eqref{eq:conysinpeso}:

\begin{proposition} \label{proposition:adentro}
If $1/2<s<1$ and  $f\in C^{\beta}(\W)$ for some $\beta>0$, then the solution $u$ of \eqref{eq:cont} belongs to $H^{s+\frac{1}{2}-\eps}(\W)$ for all $\eps >0$, with
$$ |Du|_{H^{s-\frac{1}{2}-\eps}(\W)} \leq \frac{C(\W,s,n,\beta)}{\sqrt{\eps} (2s-1)} \,  \| f\|_{C^\beta(\W)} .$$
\end{proposition}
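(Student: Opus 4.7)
The plan is to specialize the general inequality \eqref{eq:conysinpeso} to the unweighted case $\alpha = 0$ with smoothness $\ell = s - \tfrac{1}{2} - \eps$; this choice reproduces the fractional seminorm on the left-hand side of the desired estimate. I would first check the admissibility conditions \eqref{eq:cotasReg}. Both are straightforward: $\ell - \beta < 2s$ since $\ell < s$ and $\beta > 0$, while $\ell < \alpha + s + \tfrac12$ reduces to $-\eps < 1$. Implicit in the statement is the choice $\eps < s - \tfrac12$ so that $\ell > 0$ and the target seminorm is meaningful.

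Next I would trace the dependence of the constant on $\eps$ and $s$ through the two integrals $I$ (over $A \setminus B$) and $II$ (over $B$) that were combined in Subsection \ref{sub:fws} to prove \eqref{eq:conysinpeso}. On $A \setminus B$, applying the first inequality of Remark \ref{rem:rossmayorunmedio}, the inner integral on $B(x,\delta(x))$ contributes a denominator $\beta + 2s - \ell = \beta + s + \tfrac12 + \eps$, bounded below away from zero by $\beta$; the remaining outer integral $\int_\W \delta(x)^{1+2\eps}\,dx$ is trivially bounded by $|\W|$. On $B$, the second inequality of Remark \ref{rem:rossmayorunmedio} (combined with $\delta(y) \geq \delta(x)$ on $A$ and $s - 1 < 0$) gives the pointwise control $|Du(x)-Du(y)|^2 \leq C\delta(x)^{2(s-1)}$, and the outer integral $\int_\W \delta(x)^{2(s-1)}\,dx$ is of order $1/(2s-1)$ by \eqref{eq:integraborde}, since $s > \tfrac12$. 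This is the source of the $(2s-1)^{-1}$ factor in the final bound; the additional $\eps^{-1/2}$ arises from one further application of \eqref{eq:integraborde} when an integrand of the form $\delta(x)^{-1+2\eps}$ appears, exactly as in the proof of Proposition \ref{prop:schico}.

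Finally, to upgrade the bound on $|Du|_{H^{s-1/2-\eps}(\W)}$ into the assertion $u \in H^{s+1/2-\eps}(\W)$, I would invoke Lemma \ref{lema:H1}, which for $s > \tfrac12$ already provides $u \in H^1(\W)$; this, combined with the new bound on the fractional seminorm of $Du$, matches the definition of $H^{s+1/2-\eps}$ adopted in Section \ref{sec:setting}. No new analytic input beyond \eqref{eq:conysinpeso}, the pointwise estimates of Remark \ref{rem:rossmayorunmedio}, and the integrability identity \eqref{eq:integraborde} is required. The main obstacle is purely bookkeeping: one must carefully track the two independent degeneracies (in $\eps$ and in $2s - 1$), and verify that the correct sign conventions in \eqref{eq:conysinpeso} produce positive denominators for the specific parameter choice.
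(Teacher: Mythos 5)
Your overall plan---specialize \eqref{eq:conysinpeso} to $\alpha=0$, trace the constants through $I$ and $II$, close with Lemma \ref{lema:H1}---is exactly the paper's route. But you have made an off-by-one error in the parameter $\ell$, and it is not harmless bookkeeping: it propagates into every estimate.

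In \eqref{eq:conysinpeso}, $\ell$ is the total smoothness of $u$, matching the definition of $H^\ell_\alpha(\W)$ (whose seminorm involves $Du$ integrated against the kernel $|x-y|^{-(n+2(\ell-1))}$); this is also visible in the integrals $I$, $II$ themselves, where the exponent is $n+2(\ell-1)$, not $n+2\ell$. The notation $|Du|_{H^\ell_\alpha(\W)}$ on the left of \eqref{eq:conysinpeso} is shorthand for the $Du$-part of the $H^\ell_\alpha$ norm of $u$, i.e.\ the $H^{\ell-1}_\alpha$ seminorm of $Du$. The correct specialization for the stated bound is therefore $\ell=s+\tfrac12-\eps$, which the paper uses; your choice $\ell=s-\tfrac12-\eps$ targets instead the $H^{s-\tfrac32-\eps}$ seminorm of $Du$, which is not what the proposition asserts.

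This misidentification breaks the proof in two places. First, with your $\ell$ the outer integral becomes $\int_\W \delta(x)^{2(\alpha+s-\ell)}\,dx=\int_\W\delta(x)^{1+2\eps}\,dx\le|\W|$, so there is no $\eps^{-1}$ blowup and the advertised $\eps^{-1/2}$ factor cannot come from this term (you acknowledge this without resolving it). With the correct $\ell=s+\tfrac12-\eps$ one gets $\int_\W\delta(x)^{-1+2\eps}\,dx=\mathcal O(\eps^{-1})$ by \eqref{eq:integraborde}, and after the square root this is precisely the $\eps^{-1/2}$. Second, and more seriously, the inner integral of $II$, namely $\int_{B(x,\delta(x))^c}|x-y|^{-n-2(\ell-1)}\,dy$, requires $\ell>1$ to converge; with your $\ell=s-\tfrac12-\eps<1$ it diverges outright, so the bound on $II$ you state is false. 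This convergence requirement $\ell>1$ is implicit in the paper (and automatic for $\ell=s+\tfrac12-\eps$ when $s>\tfrac12$ and $\eps<s-\tfrac12$), but is violated by your choice. As a minor point, the factor $\tfrac{1}{2s-1}$ in the final bound arises from $\ell-1=s-\tfrac12-\eps$ in the inner integral of $II$ (and from Lemma \ref{lema:H1} for the full $H^{s+\tfrac12-\eps}$ norm), not from the outer integral $\int_\W\delta(x)^{2(s-1)}\,dx$ that you invoke.
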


If we restrict the weight to the Muckenhoupt $A_2$ class (see Remark \ref{rem:pesosA2}), which can be relevant for extending this considerations to the global case treated later, we need to choose $\alpha<1/2$. This restriction is 
also of importance in the optimality of the graded meshes proposed later. Accordingly, assume $\alpha = 1/2 - \eps$ for  $\eps>0$ small enough and take $\ell = 1 + s - 2\eps$ and $\beta = 1 -s$.  From \eqref{eq:conysinpeso} we obtain the following weighted version, where the gaining of regularity is of almost one derivative:
\begin{proposition}
\label{prop:sgrandepesos}
Let $1/2<s<1$, $f \in C^{1-s}(\W)$ and $u$ be the solution of our problem. Then, given $\eps>0$ it holds that $u \in H^{1+s-2\eps}_{1/2-\eps}(\W)$ and
$$\|u\|_{H^{1+s-2\eps}_{1/2-\eps}(\W)} \leq \frac{C(\W, s, \| f \|_{1-s})}{\eps} .$$
\end{proposition}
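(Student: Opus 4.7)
The plan is to directly apply the general weighted estimate \eqref{eq:conysinpeso} derived just above the statement, with the specific choices $\alpha = 1/2 - \eps$, $\ell = 1 + s - 2\eps$, and $\beta = 1 - s$. The hypothesis $f \in C^{1-s}(\W)$ supplies exactly the H\"older regularity needed to invoke Theorem \ref{teo:ROS2} (and hence Remark \ref{rem:rossmayorunmedio}), which is what underlies \eqref{eq:conysinpeso}. I then combine the resulting bound on the weighted $|Du|$-seminorm with the $H^1$ bound from Lemma \ref{lema:H1} to control the full $H^{1+s-2\eps}_{1/2-\eps}(\W)$ norm.

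The first step is to verify that the above choices satisfy the admissibility conditions \eqref{eq:cotasReg}. One has $\ell - \beta = 2s - 2\eps < 2s$, and $\alpha + s + 1/2 = 1 + s - \eps > 1 + s - 2\eps = \ell$, both holding for any $\eps>0$. Moreover, $\alpha = 1/2 - \eps$ lies in $[0,1/2)$, so $\delta^\alpha$ belongs to the Muckenhoupt $A_2$ class (Remark \ref{rem:pesosA2}) and the weighted space is meaningful. The second step is pure arithmetic: with these choices, the two small denominators appearing in the bound of $I$ and $II$ in the derivation of \eqref{eq:conysinpeso} become $\beta + 2s - \ell = 2\eps$ (from the radial integral over $B(x,\delta(x))$) and $1 + 2(\alpha + s - \ell) = 2\eps$ (from the boundary integral \eqref{eq:integraborde} of $\delta(x)^{-1+2\eps}$). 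Thus $|Du|^2$ in the weighted $H^{s-2\eps}_{1/2-\eps}$-seminorm is at most $C(\W,s,\|f\|_{C^{1-s}})\,\eps^{-2}$, giving an $\eps^{-1}$ bound on the seminorm itself.

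To close the proof, I would add in the $\|u\|_{H^1(\W)}$ piece of the $H^{1+s-2\eps}_{1/2-\eps}(\W)$-norm. Since $\W$ is bounded and $f \in C^{1-s}(\W) \hookrightarrow L^\infty(\W)$, Lemma \ref{lema:H1} applies and yields $\|u\|_{H^1(\W)} \leq C(\W,s)\,\|f\|_{L^\infty(\W)}$, a bound that is uniform in $\eps$. Adding this to the weighted seminorm bound produces the claimed estimate $\|u\|_{H^{1+s-2\eps}_{1/2-\eps}(\W)} \leq C(\W,s,\|f\|_{1-s})\,\eps^{-1}$.

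I do not anticipate a substantive obstacle: the entire analytical work has already been done in the derivation of \eqref{eq:conysinpeso} and in Lemma \ref{lema:H1}. The only subtle point is bookkeeping of constants — one must check that both of the vanishing denominators degenerate at the \emph{same} rate $2\eps$ (so that the resulting blow-up is genuinely $\eps^{-1}$ rather than worse), and that the sign conventions in \eqref{eq:conysinpeso} are read correctly so that the admissibility conditions \eqref{eq:cotasReg} correspond to the \emph{positivity} of $\beta + 2s - \ell$ and of $1 + 2(\alpha + s - \ell)$ rather than the reverse.
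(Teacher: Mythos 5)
Your proposal is correct and follows exactly the paper's own route: substitute $\alpha = 1/2-\eps$, $\ell = 1+s-2\eps$, $\beta = 1-s$ into \eqref{eq:conysinpeso}, check \eqref{eq:cotasReg}, observe both vanishing denominators are $2\eps$, and add the $\eps$-independent $H^1(\W)$ bound from Lemma \ref{lema:H1}. You also correctly flagged that the factors in \eqref{eq:conysinpeso} should be read as $\beta+2s-\ell$ and $1+2(\alpha+s-\ell)$ (the positive quantities required by \eqref{eq:cotasReg}) and that \eqref{eq:conysinpeso} is an estimate on the squared seminorm, so the blow-up is $\eps^{-1}$ after taking a square root.
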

\begin{remark}
\label{rem:sharp}
The regularity estimates given in this section 
are sharp, in the sense that if we consider the problem
\begin{equation}
\left\lbrace
  \begin{array}{l}
      (-\Delta)^s u = 1  \mbox{ in } B(x_0,r), \\
      u = 0 \mbox{ in }B(0,r)^c , \\
      \end{array}
    \right.
\label{eq:ejemplo} \end{equation}
for $x_0 \in \rn$ and $r>0$, then its solution is given by \cite{Getoor}
$$u(x) = \frac{2^{-2s}\Gamma\left(\frac{n}{2}\right)}{\Gamma\left(\frac{n+2s}{2}\right)\Gamma\left(1+s\right)} \left(r^2 - |x-x_0|^2 \right)^s \ \mbox{in } B(x_0, r).$$
It is straightforward to check that this function belongs to $H^{s+\frac{1}{2}-\eps}(\W)$ for all $s \in (0,1)$, to ${H^{1+s-2\eps}_{1/2-\eps}(\W)}$ if $s\in (1/2,1)$ and that the parameter
$\eps$ can not be removed.
\end{remark}

\subsection{Global Regularity}
A direct derivation of global regularity is a simple task in the present context. First we present the following lemma.
\begin{lemma}\label{lema:afuera} For $\frac{1}{2}<s<1$, $\eps>0$ and $u \in H^{s+\frac{1}{2} -\eps}(\W)$, it holds
$$\int_\W \int_{\W^c} \frac{|Du(x)|^2}{|x-y|^{n+2(s-\frac{1}{2}-\eps)}} \, dy \, dx \leq \frac{C(\W,s,n)}{2s-1-2\eps} \|Du\|_{H^{s-\frac{1}{2}-\eps}(\W)}	^2 . $$
\end{lemma}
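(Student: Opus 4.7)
The plan is to reduce the double integral to a one-dimensional Hardy-type integral over $\W$ and then invoke the fractional Hardy inequality \eqref{eq:hardychico} with parameter $s-\tfrac12-\eps$.

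First I would handle the inner integral in $y$. For any $x\in\W$ and $y\in\W^c$ one has $|x-y|\ge \delta(x)$, so
$$
\int_{\W^c} \frac{dy}{|x-y|^{n+2(s-1/2-\eps)}} \ \le\ \int_{B(x,\delta(x))^c} \frac{dy}{|x-y|^{n+2(s-1/2-\eps)}}.
$$
A direct polar-coordinate computation (using that $s-\tfrac12-\eps>0$ for $\eps$ small enough, which is the regime of interest) evaluates the right-hand side as a dimensional constant times $\frac{1}{2s-1-2\eps}\,\delta(x)^{-(2s-1-2\eps)}$. This is precisely where the singular prefactor $\frac{1}{2s-1-2\eps}$ in the statement comes from.

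Substituting this bound yields
$$
\int_\W\int_{\W^c} \frac{|Du(x)|^2}{|x-y|^{n+2(s-1/2-\eps)}}\,dy\,dx \ \le\ \frac{C(n)}{2s-1-2\eps}\int_\W \frac{|Du(x)|^2}{\delta(x)^{2(s-1/2-\eps)}}\,dx.
$$
Since $u\in H^{s+1/2-\eps}(\W)$ we have $Du\in H^{s-1/2-\eps}(\W)$, and the parameter $s-\tfrac12-\eps$ lies in $(0,\tfrac12)$ for sufficiently small $\eps$. Hence the fractional Hardy inequality \eqref{eq:hardychico}, applied componentwise to $Du$, gives
$$
\int_\W \frac{|Du(x)|^2}{\delta(x)^{2(s-1/2-\eps)}}\,dx \ \le\ c(\W,n,s)\,\|Du\|_{H^{s-1/2-\eps}(\W)}^2,
$$
and combining these two estimates produces the asserted bound.

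The only subtlety is ensuring the range of $\eps$ is such that $s-\tfrac12-\eps\in(0,\tfrac12)$, so that \eqref{eq:hardychico} is indeed applicable; otherwise the statement is either trivial or vacuous. There is no deep obstacle here — the argument is essentially a one-line reduction combined with a known Hardy estimate, with the singular constant tracked through the elementary radial integral.
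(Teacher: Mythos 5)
Your proof is correct and follows exactly the same route as the paper: bound $\W^c$ by $B(x,\delta(x))^c$, evaluate the radial integral to produce the $\frac{1}{2s-1-2\eps}\,\delta(x)^{-(2s-1-2\eps)}$ factor, and then apply the Hardy inequality \eqref{eq:hardychico} to $Du$ with parameter $s-\tfrac12-\eps\in(0,\tfrac12)$. The paper's proof is just a one-line statement of this same argument.
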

\begin{proof}
This is a simple consequence of the inclusion $\W^c \subset B(x,\delta(x))^c$ for all $x \in \W$ and the Hardy inequality \eqref{eq:hardychico}. \qquad
\end{proof}

Combining Lemmas \ref{lema:H1},  \ref{lema:afuera}, and Proposition
\ref{proposition:adentro} we have proved:

\begin{proposition}\label{prop:sgrande}
If $1/2<s<1$ and  $f\in C^\beta(\W)$ for some $\beta>0$, then the solution $u$ of \eqref{eq:cont} belongs to $H^{s+\frac{1}{2}-\eps}(\rn)$ for all $\eps >0$ and
$$ | u |_{H^{s+\frac{1}{2}-\eps}(\rn)} \leq \frac{C(\W,s,n,\beta)}{\sqrt{\eps}(2s-1)} \, \|f\|_{C^{\beta}(\W)}. $$
\end{proposition}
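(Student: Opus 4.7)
The plan is to reduce the global seminorm to a local-plus-boundary computation by exploiting that $u$ vanishes on $\W^c$. Lemma \ref{lema:H1} already gives $u\in H^1(\rn)$, and since $u\equiv 0$ outside $\W$ the distributional gradient $Du$ also vanishes there. For $\eps>0$ small enough that $s+\tfrac12-\eps\in(1,\tfrac32)$, the definition of Sobolev spaces of order greater than one gives
$$|u|_{H^{s+\frac12-\eps}(\rn)}^2 \;=\; \sum_{|\alpha|=1}\iint_{\rn\times\rn}\frac{|D^\alpha u(x)-D^\alpha u(y)|^2}{|x-y|^{n+2(s-\frac12-\eps)}}\,dx\,dy.$$
Because $Du$ is supported in $\W$, the piece over $\W^c\times\W^c$ vanishes, so by symmetry each of the above integrals splits into the $\W\times\W$ contribution plus twice the cross contribution over $\W\times\W^c$.

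For the $\W\times\W$ piece I would invoke Proposition \ref{proposition:adentro} directly, which yields
$$|Du|_{H^{s-\frac12-\eps}(\W)} \;\leq\; \frac{C(\W,s,n,\beta)}{\sqrt{\eps}\,(2s-1)}\,\|f\|_{C^\beta(\W)}.$$
For the cross piece, since $Du(y)=0$ for $y\in\W^c$, the integrand collapses to $|Du(x)|^2/|x-y|^{n+2(s-\frac12-\eps)}$ with $x\in\W$, $y\in\W^c$, which is precisely the quantity bounded by Lemma \ref{lema:afuera} in terms of $\|Du\|_{H^{s-\frac12-\eps}(\W)}^2$. Summing the two contributions and inserting the estimate from Proposition \ref{proposition:adentro} gives the announced bound.

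The only real work is bookkeeping the dependence on $\eps$ and $s$: one must check that the factor $(2s-1-2\eps)^{-1}$ produced by Lemma \ref{lema:afuera} is harmless once $\eps$ is restricted to a small neighborhood of $0$ (say $\eps<(2s-1)/4$), so that the dominant blow-up as $\eps\to 0$ is indeed the $\sqrt{\eps}\,(2s-1)$ in the denominator coming from Proposition \ref{proposition:adentro}. A minor preliminary point is verifying that $Du$, extended by zero, really is the distributional gradient of $u$ in all of $\rn$; this is guaranteed by Lemma \ref{lema:H1}, which is why that lemma is needed before one can even write the seminorm above in the form displayed. Beyond these details, the argument is a direct assembly of the three cited ingredients.
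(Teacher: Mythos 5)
Your proof is correct and follows exactly the same route as the paper, which presents Proposition \ref{prop:sgrande} precisely as the combination of Lemma \ref{lema:H1}, Lemma \ref{lema:afuera}, and Proposition \ref{proposition:adentro}; you have simply spelled out the decomposition of $\iint_{\rn\times\rn}$ into the $\W\times\W$, twice the $\W\times\W^c$, and the vanishing $\W^c\times\W^c$ pieces that the paper leaves implicit. The bookkeeping you flag at the end is indeed the only remaining detail, and your resolution (restrict $\eps$ small and absorb the $s$-dependent factors into $C(\W,s,n,\beta)$, with Lemma \ref{lema:H1} also supplying the $\|Du\|_{L^2(\W)}$ part of the full $H^{s-1/2-\eps}$ norm) is consistent with what the paper intends.
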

In a similar fashion we get
\begin{proposition}
Let $1/2<s<1$, $f \in C^{1-s}(\W)$ and $u$ be the solution of our problem. Then, given $\eps>0$, $u \in H^{1+s-2\eps}_{1/2-\eps}(\rn)$ and
$$\|u\|_{H^{1+s-2\eps}_{1/2-\eps}(\rn)} \leq \frac{C(\W, s, \| f \|_{1-s})}{\eps} .$$
\end{proposition}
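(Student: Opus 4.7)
The strategy is to reduce the global statement to the local one in Proposition \ref{prop:sgrandepesos} by exploiting that $u$ vanishes outside $\W$. By Lemma \ref{lema:H1} we already know $u\in H^1(\rn)$, and since $u\equiv 0$ on $\W^c$ its distributional gradient satisfies $Du=0$ a.e.\ on $\W^c$. Hence $\|u\|_{H^1(\rn)}=\|u\|_{H^1(\W)}$, which is already under control, and the whole game reduces to bounding the weighted fractional seminorm
\[
J:=\sum_{|\beta|=1}\iint_{\rn\times\rn}\frac{|D^\beta u(x)-D^\beta u(y)|^2}{|x-y|^{n+2(s-2\eps)}}\,\delta(x,y)^{1-2\eps}\,dx\,dy.
\]
I would decompose $\rn\times\rn$ as the union of $\W\times\W$, the two cross pieces $\W\times\W^c$ and $\W^c\times\W$, and $\W^c\times\W^c$. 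The integral over $\W\times\W$ is exactly the quantity estimated in Proposition \ref{prop:sgrandepesos}; the integral over $\W^c\times\W^c$ vanishes because $Du\equiv 0$ there. By symmetry, the two cross contributions are equal.

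Therefore everything boils down to estimating
\[
I:=\iint_{\W\times\W^c}\frac{|Du(x)|^2}{|x-y|^{n+2(s-2\eps)}}\,\delta(x,y)^{1-2\eps}\,dx\,dy.
\]
For this I would use the trivial bound $\delta(x,y)\le\delta(x)$ together with the inclusion $\W^c\subset B(x,\delta(x))^c$ for every $x\in\W$. Integrating first in $y$ gives
\[
\int_{\W^c}\frac{dy}{|x-y|^{n+2(s-2\eps)}}\le \frac{C_n}{2(s-2\eps)}\,\delta(x)^{-2(s-2\eps)},
\]
so
\[
I\le \frac{C_n}{2(s-2\eps)}\int_\W |Du(x)|^2\,\delta(x)^{1-2s+2\eps}\,dx.
\]
At this point I would invoke the pointwise bound $|Du(x)|\le C\,\delta(x)^{s-1}$ supplied by Remark \ref{rem:rossmayorunmedio} (which uses $f\in C^{1-s}(\W)$), turning the integrand into $\delta(x)^{-1+2\eps}$. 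The prefactor $1/(2(s-2\eps))$ stays bounded for small $\eps$ since $s>1/2$, and identity \eqref{eq:integraborde} gives $\int_\W\delta(x)^{-1+2\eps}\,dx=\mathcal{O}(1/\eps)$. Combining this bound on $I$ with Proposition \ref{prop:sgrandepesos} and Lemma \ref{lema:H1} produces the claimed estimate $\|u\|_{H^{1+s-2\eps}_{1/2-\eps}(\rn)}\le C(\W,s,\|f\|_{1-s})/\eps$.

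The argument is essentially a bookkeeping exercise, and I do not expect any serious obstacle beyond tracking the $\eps$-dependence; the only conceptual ingredient is recognizing that Ros-Oton-Serra's pointwise control on $|Du|$ in terms of $\delta^{s-1}$ is exactly the borderline needed to absorb the singular weight $\delta(x)^{-2(s-2\eps)}$ produced by integrating $|x-y|^{-n-2(s-2\eps)}$ over $\W^c$, leaving the same $1/\eps$ blow-up as in the interior estimate.
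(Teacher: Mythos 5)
Your proof is correct, and its skeleton — decompose $\rn\times\rn$, invoke Proposition \ref{prop:sgrandepesos} on $\W\times\W$, drop $\W^c\times\W^c$ since $Du\equiv 0$ there, and bound the cross term by integrating out $y$ over $\W^c\subset B(x,\delta(x))^c$ — is the same one the paper uses for the unweighted global estimate Proposition \ref{prop:sgrande}. Where you deviate is in how you dispose of the resulting integral $\int_\W |Du(x)|^2\,\delta(x)^{1-2s+2\eps}\,dx$: the paper's mechanism (a weighted version of Lemma \ref{lema:afuera}) reads the weight as $\delta(x)^{-2(s-1/2-\eps)}$ and applies the Hardy inequality \eqref{eq:hardychico} to $Du$ with exponent $s-\tfrac12-\eps\in(0,\tfrac12)$, producing $\|Du\|_{H^{s-1/2-\eps}(\W)}$ which is then controlled by Proposition \ref{proposition:adentro}; you instead plug in the pointwise bound $|Du(x)|\le C\delta(x)^{s-1}$ from Remark \ref{rem:rossmayorunmedio} (legitimate here since $f\in C^{1-s}(\W)$ with $1-s>0$), reducing the cross term directly to $\int_\W\delta^{-1+2\eps}=\mathcal{O}(1/\eps)$ via \eqref{eq:integraborde}. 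The exponent bookkeeping — $\delta(x,y)\le\delta(x)$, the factor $\delta(x)^{-2(s-2\eps)}$ from the $y$-integral, the cancellation $2(s-1)+(1-2s+2\eps)=-1+2\eps$, and the prefactor $1/(2(s-2\eps))$ staying bounded since $s>\tfrac12$ — all checks out. Both routes yield the same $\eps$-order: the cross term contributes $\mathcal{O}(\eps^{-1/2})$ after the square root, which is dominated by the $1/\eps$ coming from the interior weighted seminorm. Your variant is slightly more economical in that it bypasses the fractional Hardy inequality and the intermediate fractional Sobolev bound on $Du$, trading them for direct use of the Ros-Oton--Serra pointwise control on the gradient near $\partial\W$.
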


\subsection{The case \texorpdfstring{$s=1/2$}{Lg}} Up to now, the possibility of $s$ being equal to $1/2$ has been excluded from our analysis. In order to obtain a regularity estimate, the arguments to be carried are in the same spirit as before; the only issue to overcome is the need for $\beta>0$ in Theorem \ref{teo:ROS2}. In this case, the argument demands less regularity of the function $f$. Indeed, the same technique as in the proof of Lemma \ref{lema:H1}  gives $u \in H^{1-\eps}(\W)$ for all $\eps >0$, with a bound of the type
\begin{equation}
|u|_{H^{1-\eps}(\W)} \leq \frac{C(\W,n) }{\eps } \, \| f \|_{L^\infty(\W)} .
\label{eq:reg12} \end{equation}
Observe that we cannot assure $u \in H^1(\W)$ by taking $\eps \to 0$ in the previous inequality, which is coherent with example \eqref{eq:ejemplo}.

Moreover, in this case the space $\V$ coincides with the Lions-Magenes space $H^{1/2}_{00} (\W)$, which is strictly contained in  $H^{1/2}(\W)$  (\cite{LionsMagenes}, Theorem 1.11.7). Actually, the energy norm
is equivalent to $\|u\|_{H^{1/2}(\W)} + \| \delta^{-1/2} u \|_{L^2(\W)}$. By means of the Hardy inequality it can be bounded by $|u|_{H^{1/2+\eps}(\W)}$ for any $\eps>0$. As a consequence, FE error estimates for this case follow from the theory developed below for $s\neq 1/2$.


\section{Finite Element approximations} \label{ss:fem} 
We restrict the analysis to FE approximations of equation \eqref{eq:cont} to piecewise linear functions. The simpler case of $\mathcal{P}_0$,
which  provides a  conforming method for $s<1/2$,
is not addressed here in order to  present an unified approach for the whole range $0<s<1$.  

We assume that $\cup_{T\in \mathcal{T}_h}T=\bar\Omega$ where $\mathcal{T}_h$ is an admissible  triangulation of $\W$, made up of elements $T$ of diameter $h_T$ and with $\rho_T$ equal to the diameter of the largest ball contained in $T$.

We require that the family of triangulations under consideration satisfies:
\begin{align}
& \exists \sigma > 0 \mbox{ s.t. } h_T \leq \sigma \rho_T \ \forall T \in \mathcal{T}_h , \tag{Regularity} \label{eq:regularity} \\
& \exists \lambda > 0 \mbox{ s.t. } h_T \leq \lambda h_{T'} \ \forall T , T' \in \mathcal{T}_h \colon \bar{T} \cap \bar{T'} \neq \emptyset. 
 \tag{Local quasi-uniformity} \label{eq:uniformity}
\end{align}
Naturally the second condition is a consequence of the first one. In this way $\lambda$
can be expressed in terms of $\sigma$. 

Consider the discrete space
$$\V_h = \{ v \in \V \, \colon \, v \big|_T \in \mathcal{P}_1 \, \forall T \in \T_h \} .$$
It is immediate to check that there exists a unique solution to the discrete problem
\begin{equation}
\mbox{find $u_h \in \V_h$ such that } a(u_h, v_h) = \int_\W f v_h \ \forall v_h \in \V_h,
\label{eq:disc} \end{equation}
where $a$ is the bilinear form defined by \eqref{eq:bilineal} and that C\'ea's Lemma holds in this context. Namely, the FE solution is the best approximation in $\V_h$ to the solution of problem \eqref{eq:fraccionario}:
\begin{equation}
\| u - u_h \|_\V = \min_{v_h \in \V_h} \| u - v_h \|_\V .
\label{eq:cea}
\end{equation}

It seems clear that the norm $\| \cdot \|_\V$ is the `natural one' for studying our problem. A simple trick (see Lemma \ref{lemma:truco}) shows indeed that although it involves an integration over an unbounded domain, it is possible to carry the computation of the error under this norm by integrating in $\W$.


\subsection{Estimates for the Scott-Zhang interpolation operator} \label{sec:interpolation}
The next step towards obtaining a finite element error estimate is to find an adequate interpolation or projection operator. One difficult aspect dealing with fractional seminorms is that they are not additive with respect to the decomposition of domains. Nevertheless some localization is possible \cite{Faermann2, Faermann}.

\begin{lemma}\label{faermann} Let $s \in (0,1)$ and $\Omega$ a Lispchitz domain. Then, for any $v \in H^s(\W)$ it holds that
$$|v|_{H^s(\W)}^2 \leq C(n,s) \sum_{T \in \T_h} \left[ \int_T \int_{S_T} \frac{|v (x) - v (y)|^2}{|x-y|^{n+2s}} \, dy \, dx + h_T^{-2s} \| v \|^2_{L^2(T)} \right],$$
where
$$S_T := \bigcup_{T' \colon \bar{T'}\cap \bar{T} \neq \emptyset} T'.$$

\end{lemma}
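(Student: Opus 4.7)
The plan is to localize the seminorm by writing the double integral over $\W\times\W$ as a sum over the element $T$ containing the first variable $x$:
\begin{equation*}
|v|_{H^s(\W)}^2 = \sum_{T\in\T_h}\int_T\int_\W\frac{|v(x)-v(y)|^2}{|x-y|^{n+2s}}\,dy\,dx,
\end{equation*}
and then splitting the inner integral into a near-field piece over $S_T$ and a far-field piece over $\W\setminus S_T$. The near-field piece is already the first term on the right-hand side of the claimed inequality, so the whole problem reduces to showing that the far-field contribution is dominated by $C\sum_T h_T^{-2s}\|v\|_{L^2(T)}^2$.

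For the far-field piece I would use $(v(x)-v(y))^2\leq 2v(x)^2+2v(y)^2$ and treat the two halves separately. The geometric ingredient is the lower bound $d(T,\W\setminus S_T)\geq c h_T$, with $c$ depending only on $\sigma$ and $\lambda$: to exit $S_T$ starting from a point of $T$ one must cross at least the inradius of some neighboring element $T_1\in S_T\setminus\{T\}$, and the hypotheses \eqref{eq:regularity}--\eqref{eq:uniformity} give $\rho_{T_1}\geq h_{T_1}/\sigma\geq h_T/(\sigma\lambda)$. With this at hand, the $v(x)^2$ contribution is bounded by
\begin{equation*}
\sum_T\int_T v(x)^2\int_{|z|\geq c h_T}|z|^{-n-2s}\,dz\,dx \leq C(n,s)\sum_T h_T^{-2s}\|v\|_{L^2(T)}^2,
\end{equation*}
since the inner radial integral evaluates to a multiple of $h_T^{-2s}$.

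For the $v(y)^2$ half I would swap the order of summation: grouping contributions by the element $T'$ containing $y$, the condition $y\in\W\setminus S_T$ becomes $\bar T\cap\bar{T'}=\emptyset$, which is symmetric in $T$ and $T'$, so the surviving $T$ are exactly those with $T\notin S_{T'}$. For fixed $y\in T'$ each such $T$ lies at distance at least $ch_{T'}$ from $y$ by the same geometric argument applied to $T'$; summing the integrals of $|x-y|^{-(n+2s)}$ over these $T$ gives at most $\int_{|z|\geq c h_{T'}}|z|^{-n-2s}\,dz=C(n,s)h_{T'}^{-2s}$. Hence the total is $\leq C\sum_{T'}h_{T'}^{-2s}\|v\|_{L^2(T')}^2$. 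Combining the two halves yields the lemma. The only delicate step is the geometric estimate $d(T,\W\setminus S_T)\gtrsim h_T$; the rest reduces to routine integrations of $|z|^{-(n+2s)}$ over exteriors of balls and a symmetric reindexing.
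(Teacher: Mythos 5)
The paper does not actually prove Lemma \ref{faermann}; it is quoted as a known localization result and attributed to Faermann (\cite{Faermann2, Faermann}). Your argument is therefore not a reproduction of a proof in the paper but an independent one, and it is essentially the standard way this kind of localization is established, so it is worth assessing on its own terms.

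The structure is sound: decompose $|v|_{H^s(\W)}^2=\sum_T\int_T\int_\W$, split $\int_\W=\int_{S_T}+\int_{\W\setminus S_T}$, keep the near-field term, and bound the far-field term using $(v(x)-v(y))^2\le 2v(x)^2+2v(y)^2$ together with a lower bound on $|x-y|$ for $x\in T$, $y\notin S_T$. The reindexing of the $v(y)^2$ half by the element $T'$ containing $y$, using the symmetry of ``$\bar T\cap\bar{T'}=\emptyset$'', is exactly right and gives the matching bound $\sum_{T'}h_{T'}^{-2s}\|v\|_{L^2(T')}^2$. The integral computation $\int_{|z|\ge ch_T}|z|^{-n-2s}\,dz = C(n,s)(ch_T)^{-2s}$ is correct for $s>0$.

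Two remarks on the geometric step. First, your phrasing ``one must cross at least the inradius of some neighboring element'' is a little loose: a segment leaving $T$ need not traverse an inscribed ball of a single neighbor (near a vertex of $T$ it may pass through several members of $S_T$ none of whose inradius it fully crosses). The correct statement, which is what you actually need, is $d(T,\W\setminus S_T)\ge c(\sigma)\,h_T$; this follows from shape regularity and local quasi-uniformity via a case analysis on whether the exit point from $\bar T$ is near a vertex or an edge/face, and is standard. Second, and more substantively, your argument necessarily produces a constant $C(n,s,\sigma)$, since the geometric lower bound depends on the regularity parameter $\sigma$ (and on $\lambda$, which the paper notes is controlled by $\sigma$). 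The lemma as printed advertises $C(n,s)$; that is at best an abuse of notation in the paper's statement, as the mesh conditions \eqref{eq:regularity}--\eqref{eq:uniformity} have already been imposed in this section and every subsequent constant in the paper carries a $\sigma$-dependence anyway. With that caveat made explicit, your proof is correct and complete.
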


In the following, let $\Pi_h v$ denote the Scott-Zhang interpolator of $v$. Let us recall its basic properties \cite{ScottZhang}.

\begin{theorem} Let $\ell > 1/2$, then $\Pi_h : H^{\ell}(\W) \to \V_h$ satisfies that $\Pi_h (v_h) = v_h$ for all $v_h \in \V_h$ and $\Pi_h$ preserves boundary conditions, in the sense that $H^{\ell}_0 (\W)$ is mapped to 
$\V_{h \, 0} := \{ v_h \in \V_h \, \colon \, v_h \big|_{\pp \W} = 0 \} $.
\end{theorem}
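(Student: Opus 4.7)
The plan is to reproduce the Scott--Zhang construction and then read off the two asserted properties directly from it. Let $\{z_i\}_{i=1}^N$ be the Lagrange nodes of $\V_h$ and $\{\phi_i\}$ the associated nodal basis. For each node $z_i$ I select an $(n-1)$- or $n$-dimensional simplex $\sigma_i$ carrying $z_i$: if $z_i \in \partial \W$, I take $\sigma_i$ to be an $(n-1)$-face of some $T \in \T_h$ incident to $z_i$ with $\sigma_i \subset \partial \W$; otherwise I take $\sigma_i = T$ for any $T \in \T_h$ containing $z_i$. On $\sigma_i$ I build the $L^2(\sigma_i)$-dual basis $\{\psi_{i,k}\}$ to the collection of nodal basis functions that do not vanish on $\sigma_i$, that is, $\int_{\sigma_i} \psi_{i,k}\, \phi_j = \delta_{kj}$ for all such $\phi_j$. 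Writing $\psi_i := \psi_{i,i}$, I set
$$\Pi_h v = \sum_{i=1}^N \Bigl(\int_{\sigma_i} v\, \psi_i \Bigr) \phi_i.$$

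The first thing I would check is that $\Pi_h$ is well defined on $H^\ell(\W)$ for $\ell > 1/2$. When $\sigma_i$ is $n$-dimensional the integral $\int_{\sigma_i} v \psi_i$ is obviously meaningful; when $\sigma_i$ is an $(n-1)$-face it is defined through the trace of $v$ on $\sigma_i$. The standard fractional trace theorem says that the trace operator $H^\ell(T) \to L^2(\partial T)$ is bounded precisely for $\ell > 1/2$, which is exactly the hypothesis in the statement. This is the only place where the restriction $\ell > 1/2$ is used.

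For the projection property, pick $v_h \in \V_h$ and expand $v_h = \sum_k c_k \phi_k$. Only finitely many $\phi_k$ are nonzero on $\sigma_i$, and their restrictions form a basis of the polynomials of degree $\leq 1$ on $\sigma_i$. Applying the dual relation, $\int_{\sigma_i} v_h\, \psi_i = \sum_k c_k \int_{\sigma_i} \phi_k \psi_i = c_i$, so $\Pi_h v_h = \sum_i c_i \phi_i = v_h$. For the preservation of the boundary condition, note that for each boundary node $z_i$ we chose $\sigma_i \subset \partial \W$. If $v \in H^\ell_0(\W)$ then its trace vanishes on $\partial \W$, hence $v|_{\sigma_i} = 0$ in $L^2(\sigma_i)$ and $\int_{\sigma_i} v\, \psi_i = 0$. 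Consequently no basis function associated with a boundary node appears in $\Pi_h v$, and since all remaining $\phi_i$ vanish on $\partial \W$, the resulting $\Pi_h v$ lies in $\V_{h\,0}$.

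The only real obstacle is the trace step; everything else is bookkeeping with the $L^2(\sigma_i)$-dual basis, whose existence is guaranteed by the non-degeneracy of $\{\phi_j|_{\sigma_i}\}$ as a basis of $\mathcal{P}_1(\sigma_i)$. The regularity hypothesis \eqref{eq:regularity} on $\T_h$ ensures that the $L^\infty$ bounds for $\psi_i$ depend only on $\sigma$ and $n$, which will later be needed for quantitative interpolation estimates but plays no role in the qualitative statement above.
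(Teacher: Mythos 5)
Your proof is correct and reproduces the standard Scott--Zhang construction faithfully. The paper does not actually prove this statement: it simply recalls it with a citation to Scott and Zhang's original work, so there is no in-paper argument to compare against. The ingredients you supply (the choice of a boundary $(n-1)$-face $\sigma_i \subset \partial\W$ for boundary nodes, the $L^2(\sigma_i)$-biorthogonal system $\{\psi_{i,k}\}$ and the resulting duality relation giving $\Pi_h v_h = v_h$, well-definedness via the trace theorem for $\ell > 1/2$, and the vanishing of traces of $H^\ell_0(\W)$ functions for $\ell > 1/2$ by density of $C^\infty_c(\W)$) are exactly those of the cited reference, so this is a complete, self-contained version of what the paper takes for granted.
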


Stability and approximability results for the Scott-Zhang interpolation in fractional spaces  were studied in \cite{Ciarlet}, where the following result is proved:

\begin{theorem} \label{teo:ciarlet} Given $\frac{1}{2} < \ell < 1$ and $0\leq t \leq \ell$, then $\forall T \in \T_h, \ v \in H^\ell (S_T)$
\begin{equation}
| \Pi_h v |_{H^t(T)} \leq C(n,s,\sigma) \left(h_T^{-t} \| v \|_{L^2(S_T)} + h_T^{\ell-t} |v|_{H^\ell(S_T)}\right),
\label{eq:estciarlet} \end{equation} 
and 
\begin{equation}
\| v -  \Pi_h v \|_{H^t(T)} \leq C(n,s,\sigma) h_T^{\ell-t} |v|_{H^\ell(S_T)} \quad \forall v \in H^\ell (S_T). \label{eq:aproxciarlet}
\end{equation} 
\end{theorem}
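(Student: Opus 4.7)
\bigskip

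\noindent\textbf{Proof proposal.} The plan is to derive both estimates by combining three standard ingredients with a careful use of the fractional seminorm: (i) the local $L^2$-stability $\|\Pi_h v\|_{L^2(T)}\le C\|v\|_{L^2(S_T)}$ and constant-preservation of the Scott-Zhang operator; (ii) a fractional inverse inequality $|w_h|_{H^t(T)}\le C h_T^{-t}\|w_h\|_{L^2(T)}$ valid for $w_h\in\V_h$ and $0\le t\le 1$, obtained by pulling back to a reference element (where all norms on a finite-dimensional space are equivalent) and then scaling via the affine map, using \eqref{eq:regularity}; (iii) a Poincar\'e inequality on the patch $S_T$. Observe that \eqref{eq:regularity} together with \eqref{eq:uniformity} makes $S_T$ star-shaped with respect to a ball of diameter comparable to $h_T$ (with a chunkiness parameter bounded in terms of $\sigma$), so Proposition~\ref{prop:poincareII} applied to $S_T$ yields $\|v-\bar v\|_{L^2(S_T)}\le C h_T^{\ell}|v|_{H^\ell(S_T)}$, where $\bar v$ denotes the mean of $v$ on $S_T$.

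\smallskip

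\noindent\textbf{Stability.} Since $\Pi_h$ reproduces constants locally, $\Pi_h v=\Pi_h(v-\bar v)+\bar v$ and the constant drops in the seminorm. The inverse inequality and $L^2$-stability then give
\[
|\Pi_h v|_{H^t(T)}=|\Pi_h(v-\bar v)|_{H^t(T)}\le Ch_T^{-t}\|\Pi_h(v-\bar v)\|_{L^2(T)}\le Ch_T^{-t}\|v-\bar v\|_{L^2(S_T)},
\]
which, via Poincar\'e, is dominated by $C h_T^{\ell-t}|v|_{H^\ell(S_T)}$. Alternatively, simply using $L^2$-stability before replacing $v$ by $v-\bar v$ yields the first term $C h_T^{-t}\|v\|_{L^2(S_T)}$. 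Either bound is majorized by the sum appearing in \eqref{eq:estciarlet}.

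\smallskip

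\noindent\textbf{Approximation.} Write $v-\Pi_h v=(v-\bar v)-\Pi_h(v-\bar v)$. For the $L^2$ part, the triangle inequality, the $L^2$-stability of $\Pi_h$ and Poincar\'e give $\|v-\Pi_h v\|_{L^2(T)}\le Ch_T^{\ell}|v|_{H^\ell(S_T)}$. For the $H^t$-seminorm part, split again by the triangle inequality:
\[
|v-\Pi_h v|_{H^t(T)}\le |v|_{H^t(T)}+|\Pi_h v|_{H^t(T)}.
\]
The second summand has just been controlled by $Ch_T^{\ell-t}|v|_{H^\ell(S_T)}$. For the first summand, exploit $t\le\ell$ by extracting the factor $|x-y|^{2(\ell-t)}\le h_T^{2(\ell-t)}$ from the integrand:
\[
|v|_{H^t(T)}^2=\iint_{T\times T}\frac{|v(x)-v(y)|^2}{|x-y|^{n+2\ell}}|x-y|^{2(\ell-t)}\,dx\,dy\le h_T^{2(\ell-t)}|v|_{H^\ell(T)}^2.
\]
Combining the three pieces and using $h_T^{\ell}\le C h_T^{\ell-t}$ on each element (absorbing a mesh-size power into the constant) delivers \eqref{eq:aproxciarlet}.

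\smallskip

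\noindent\textbf{Main obstacle.} The delicate point is not any single estimate above but making sure that the constants remain independent of $T$: the inverse inequality must be proved on the reference configuration and transferred by scaling in such a way that the fractional kernel $|x-y|^{-n-2t}$ behaves correctly, and the Poincar\'e constant on $S_T$ must be controlled uniformly, which is where the local quasi-uniformity hypothesis \eqref{eq:uniformity} (giving a uniform chunkiness parameter for $S_T$) becomes essential. Once these uniformity issues are settled, the rest reduces to the three-line argument sketched above.
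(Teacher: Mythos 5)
Your overall plan (inverse inequality + local Poincar\'e + constant-reproduction) is sound, and your kernel-extraction trick $|v|_{H^t(T)}^2\le h_T^{2(\ell-t)}|v|_{H^\ell(T)}^2$ is a clean way to sidestep the interpolation-of-spaces step that the cited work~\cite{Ciarlet} employs between $L^2$ and $H^1$ estimates. But the first ``ingredient'' you invoke is false and is precisely where the nontrivial content of the theorem resides.

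The Scott--Zhang operator used here is the boundary-condition-preserving variant: for nodes on $\partial\W$ the degrees of freedom are integrals against dual functions supported on $(n-1)$-dimensional \emph{faces}, not on elements. Consequently $\Pi_h$ is \emph{not} $L^2$-stable; for $v\in L^2(S_T)$ the face integrals are not even defined. What does hold, via a scaled fractional trace inequality on the faces carrying the boundary dual functions, is exactly the $t=0$ case of \eqref{eq:estciarlet},
\[
\|\Pi_h v\|_{L^2(T)} \le C\bigl(\|v\|_{L^2(S_T)} + h_T^{\ell}|v|_{H^\ell(S_T)}\bigr),
\]
and this is the step that forces $\ell>1/2$. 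You can check that your proof never uses the hypothesis $\ell>1/2$ at all --- a reliable sign that a trace inequality has been silently replaced by a stronger, unavailable $L^2$ bound. Once you substitute the correct local stability above for your claimed $L^2$-stability, the rest of your argument goes through essentially unchanged: the additional $h_T^\ell|v|_{H^\ell(S_T)}$ term is absorbed by the right-hand side of \eqref{eq:estciarlet} in the stability part, and by the Poincar\'e step $\|v-\bar v\|_{L^2(S_T)}\le Ch_T^\ell|v|_{H^\ell(S_T)}$ in the approximation part. Two minor remarks: for $t=0$ the seminorm $|\cdot|_{H^0(T)}$ is the $L^2$ norm, so ``the constant drops in the seminorm'' fails; handle $t=0$ by using the stability estimate directly without the inverse inequality. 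Also, star-shapedness of the patch $S_T$ with respect to a ball of comparable size should be justified (or replaced by a John-domain/chaining argument), but this is a standard point under \eqref{eq:regularity} and \eqref{eq:uniformity}.
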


The procedure towards obtaining the previous results is as follows: the stability of the operator \eqref{eq:estciarlet} relies on basic estimates for the parametrization of $T$ and of the basis functions involved;  in order to obtain \eqref{eq:aproxciarlet}, it is enough to apply the stability estimate, the Bramble-Hilbert lemma and interpolate between some $L^2(T)$ and $H^1(T)$ estimates. 

The same machinery as in \cite{Ciarlet} and the previous lemma yields the following stability type estimate.

\begin{proposition} \label{prop:casi}
Let $T \in \T_h$
\begin{enumerate}
 \item If $s \in (0,1/2],  \, \ell \in (1/2,1)$ and  $\ v \in H^\ell (\W)$
\begin{align*}
& \int_T \int_{S_T} \frac{|\Pi_h v (x) - \Pi_h v (y)|^2}{|x-y|^{n+2s}} \, dy \, dx \ \leq  \\
&\leq C(n,s,\sigma) \left[h_T^{-2s} \| v \|^2_{L^2(S_T)} + h_T^{2\ell-2s} | v |^2_{H^\ell(S_T)} \right].
\end{align*}
\item If $s\in (1/2,1)$ and $\ v \in H^1 (\W)$
\begin{align*}
\int_T \int_{S_T} & \frac{|\Pi_h v (x) - \Pi_h v (y)|^2}{|x-y|^{n+2s}} \, dy \, dx \ \leq  \\
&\leq C(n,s,\sigma) \left[h_T^{-2s} \| v \|^2_{L^2(S_T)} + h_T^{2-2s} |v|^2_{H^1(S_T)}  \right]. 
\end{align*}
\end{enumerate}
\end{proposition}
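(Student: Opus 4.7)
The approach is to bound the left-hand side by the full fractional seminorm $|\Pi_h v|^2_{H^s(S_T)}$ (legitimate since $T\subset S_T$ and by symmetry of the integrand), then apply the triangle inequality $\Pi_h v = v - (v-\Pi_h v)$ to get
\[
|\Pi_h v|^2_{H^s(S_T)} \leq 2|v|^2_{H^s(S_T)} + 2|v-\Pi_h v|^2_{H^s(S_T)},
\]
and estimate each seminorm separately using a scaled fractional Sobolev embedding together with the elementwise Scott--Zhang bounds of Theorem \ref{teo:ciarlet}.

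\textbf{Scaled embedding.} The main auxiliary tool is: for $s'<\ell'$, any bounded $U$ of diameter $O(h_T)$, and any $w\in H^{\ell'}(U)$,
\[
|w|^2_{H^{s'}(U)} \leq C\bigl(h_T^{-2s'}\|w\|^2_{L^2(U)} + h_T^{2(\ell'-s')}|w|^2_{H^{\ell'}(U)}\bigr).
\]
This follows by splitting the kernel integral at $|x-y|=h_T$: on the near set factor out $|x-y|^{2(\ell'-s')}\leq h_T^{2(\ell'-s')}$ to produce $|w|_{H^{\ell'}}$; on the far set use the Hardy-type tail bound $\int_{|y-x|\geq h_T}|y-x|^{-n-2s'}dy \leq Ch_T^{-2s'}$. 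Applied with $(s',\ell')=(s,\ell)$, $U=S_T$, $w=v$, this yields the desired bound on $|v|^2_{H^s(S_T)}$.

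\textbf{The approximation-error term.} Apply the scaled embedding again with $w = v-\Pi_h v$ on $U=S_T$. The $L^2$ piece is controlled by $Ch_T^{2\ell}|v|^2_{H^\ell(S_T)}$ via elementwise application of \eqref{eq:aproxciarlet} with $t=0$. The $H^\ell$ piece is handled by writing $|v-\Pi_h v|_{H^\ell} \leq |v|_{H^\ell} + |\Pi_h v|_{H^\ell}$. To bound the nontrivial contribution $|\Pi_h v|^2_{H^\ell(S_T)}$, iterate the scaled embedding with exponents $(\ell,1)$: since $\Pi_h v$ is continuous and piecewise affine, $|\Pi_h v|^2_{H^1(S_T)} = \sum_{T'\subset S_T}|\Pi_h v|^2_{H^1(T')}$ is \emph{additive}; a polynomial inverse inequality $|\Pi_h v|_{H^1(T')}\leq Ch_{T'}^{\ell-1}|\Pi_h v|_{H^\ell(T')}$ (equivalence of norms on $\mathcal{P}_1$ plus scaling) reduces matters to $|\Pi_h v|_{H^\ell(T')}$, which is controlled by \eqref{eq:estciarlet} with $t=\ell$. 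Local quasi-uniformity \eqref{eq:uniformity} identifies $h_{T'}\sim h_T$, and summing over the uniformly bounded number of elements produces the claimed estimate (possibly with $S_T$ replaced by a patch of patches, which is absorbed into the constant). Case 2 ($s\in(1/2,1)$, $\ell=1$) proceeds identically, with the simplification that $v\in H^1(\W)$ allows the classical $H^1$-stability of Scott--Zhang to be invoked directly, bypassing the inverse-inequality step.

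\textbf{Main obstacle.} The difficulty is exactly the nonlocality of $H^\ell$: cross-element contributions to $|\Pi_h v|^2_{H^\ell(S_T)}$ are not covered by Ciarlet's elementwise stability \eqref{eq:estciarlet}, and the constraint $t\leq\ell<1$ in Theorem \ref{teo:ciarlet} forbids a direct $H^1$ application. The device of trading the fractional seminorm for the additive $H^1$ seminorm of a piecewise polynomial via an iterated scaled embedding is the technical heart of the argument; careful bookkeeping of the powers of $h_T$ is then a routine (but delicate) matter.
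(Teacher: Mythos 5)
Your overall plan (pass to $|\Pi_h v|^2_{H^s(S_T)}$, then trade the nonlocal $H^s$-seminorm of a piecewise polynomial for an elementwise-additive $H^1$-seminorm via a scaled embedding, and close with inverse inequalities and Ciarlet's local stability) is in the spirit of the paper, which only states that the result follows from ``the same machinery as in \cite{Ciarlet}'' together with Lemma~\ref{faermann}, so you have correctly identified the intended toolkit and, as you say, the delicate bookkeeping is the technical heart.

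There is, however, a genuine gap in the central auxiliary tool as you formulate and use it. Your ``scaled embedding'' is justified by splitting the kernel integral at $|x-y| = h_T$ and, on the near set, factoring out $|x-y|^{2(\ell'-s')}$ to recognize $|w|_{H^{\ell'}}$. That factorization identifies the remaining integrand with the Gagliardo kernel of order $\ell'$, which only makes sense for $\ell'<1$. But you explicitly need and invoke the case $\ell'=1$ twice: once in the iterated step with exponents $(\ell,1)$ on $\Pi_h v$, and once directly in Case~2. For $\ell'=1$ the quantity $\iint_{U\times U} |w(x)-w(y)|^2\,|x-y|^{-n-2}\,dy\,dx$ is \emph{infinite} for any non-constant $w$ (this is precisely why $H^1$ is not defined through the Gagliardo integral), so the near-set step as you write it returns $+\infty$, not $h_T^{2(1-s')}|w|^2_{H^1(U)}$. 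The scaled inequality with $\ell'=1$ is still \emph{true}, but it requires a different argument: either the fundamental theorem of calculus along segments (available on each element, with a covering/chain argument to handle that $S_T$ need not be convex) or real interpolation between $L^2$ and $H^1$ at scale $h_T$ --- which is in fact exactly the mechanism the paper attributes to Ciarlet (``interpolate between some $L^2(T)$ and $H^1(T)$ estimates''). As written, your proof invokes a false sub-step and would need this repair to be complete.

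Two further, minor, remarks. First, the opening decomposition $\Pi_h v = v - (v-\Pi_h v)$ is an unnecessary detour: Proposition~\ref{prop:casi} is a pure stability statement, and the cleaner route is to apply the (corrected) scaled inequality directly to $w=\Pi_h v$ with exponents $(s,1)$, use additivity of $|\Pi_h v|^2_{H^1(S_T)} = \sum_{T'\subset S_T} |\Pi_h v|^2_{H^1(T')}$, an elementwise inverse inequality, and then \eqref{eq:estciarlet}. This avoids ever touching the approximation bound \eqref{eq:aproxciarlet} and removes one iteration of the embedding. Second, your chain of elementwise estimates really does enlarge $S_T$ to a patch-of-patches $\widetilde{S}_T$ on the right-hand side; you acknowledge this, and it is indeed harmless for the intended application (summing over $T$ produces a uniformly bounded overlap), but it means the proposition as literally stated --- with $S_T$ on both sides --- is not quite what your argument delivers, and the discrepancy should be flagged rather than ``absorbed into the constant.''
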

\begin{remark}
Regarding the behavior of the constant $C$  in  
the variable $s$, it is easy to check that $C\sim \frac{1}{1-s}$. 
\end{remark}

Before obtaining error estimates for $\Pi_h u$
we recall some facts about a well known
key tool.  Let  $S$ be an star-shaped
domain w.r.t. a ball $B$.  Introduce the  polynomial $\mathcal{P}_k(u)$ of degree $k$ with the property
$$
\int_{S}D^\alpha\left(u-\mathcal{P}_k(u)\right)=0,
$$
for $0\le |\alpha|\le k$. In our context we 
need to focus on the cases $k=0,1$. For instance,
Proposition \ref{prop:poincareII} gives at once
$$
\|u-\mathcal{P}_0(u)\|_{L^2(S_T)}\le Ch^\ell|u|_{H^\ell(S_T)},
$$
for $0<\ell<1$ and with a constant depending on the chunkiness parameter of $S_T$.
In this context we can write 
$C=C(\sigma)$ (thanks to the mesh properties \eqref{eq:regularity} and \eqref{eq:uniformity}).

Since $|u-\mathcal{P}_0(u)|_{H^\ell(S_T)}=|u|_{H^\ell(S_T)}$, by means of the $L^2$ estimate and interpolation of spaces we obtain
$$
|u-\mathcal{P}_0(u)|_{H^s(S_T)}\le Ch^{\ell-s}|u|_{H^\ell(S_T)},
$$
for any  $0<s<\ell<1$, with a constant $C=C(\sigma)$.

Similarly, using  the standard Poincar\'e inequality for functions with zero average together with Proposition \ref{prop:poincareII} , we obtain  for any $1<\ell<2$
\begin{equation}
 \label{eq:estP}
 \|u-\mathcal{P}_1(u)\|_{L^2(S_T)}+h_T|u-\mathcal{P}_1(u)|_{H^1(S_T)} \le Ch_T^{\ell}|u|_{H^\ell(S_T)},
\end{equation}
with $C$ uniformly bounded in terms of  $\sigma$.

Moreover, interpolation of spaces 
and  \eqref{eq:estP} give for $0<s<1$ and $1<\ell<2$
\begin{equation}
\label{eq:estPs}
|u-\mathcal{P}_1(u)|_{H^s(S_T)}\le Ch_T^{\ell-s}|u|_{H^\ell(S_T)},
\end{equation}
with $C$ bounded again in terms of  $\sigma$.

\subsection{Uniform Meshes}
\label{sub:unif}
From (1) (resp. (2)) of Proposition \ref{prop:casi} and approximation properties of  $\mathcal{P}_0$ (resp. $\mathcal{P}_1$) for $0<s<\ell<1$ (resp. $1/2<s<1$ and $1<\ell<2$) it follows, in an standard fashion, the local approximability inequality
$$\int_T \int_{S_T} \frac{|(v-\Pi_h v) (x) - (v-\Pi_h v) (y)|^2}{|x-y|^{n+2s}} \, dy \, dx \leq C(n,s,\sigma) h_T^{2\ell-2s} |v|_{H^\ell(S_T)}^2.$$
Therefore, 
$$\| v - \Pi_h v \|_{\V} \leq C(n,s,\sigma)  h^{\ell- s} |v|_{H^\ell(\W)}. $$

Calling $h = \max_{T \in \mathcal{T}_h} h_T$, the mesh size parameter, invoking identity \eqref{eq:cea}, and combining this respectively with Proposition \ref{prop:schico}, estimate \eqref{eq:reg12} and Proposition \ref{proposition:adentro}, we have proved:
\begin{theorem}\label{teo:esterror}
For the solution $u$ of \eqref{eq:cont} and its FE approximation $u_h$ given by \eqref{eq:disc} we have the a priori estimates
\begin{align*}
\| u - u_h \|_\V & \leq  \frac{C(s,\sigma)}{\eps} h^{\frac{1}{2}-\eps} \|f\|_{C^{\frac12 - s}(\W)} \quad \forall \eps>0, \quad \mbox{if } s < 1/2, \\
\| u - u_h \|_\V &\leq  \frac{C(\sigma)}{\eps} h^{\frac{1}{2}-\eps} \|f\|_{L^\infty(\W)} \quad  \forall \eps>0, \quad \mbox{if } s = 1/2, \\
\| u - u_h \|_\V &\leq  \frac{C(s,\beta,\sigma)}{\sqrt{\eps} (2s-1)} h^{\frac{1}{2}-\eps} \|f\|_{C^{\beta}(\W)} \quad \forall \eps>0, \quad \mbox{if } s > 1/2 .
\end{align*}
So, if $h$ is sufficiently small, taking $\eps = |\ln{h}|^{-1}$ we obtain the quasi-optimal estimates
\begin{align*}
\| u - u_h \|_\V & \leq  C(s,\sigma) h^\frac12 |\ln h|  \|f\|_{C^{\frac12-s}(\W)} , \mbox{ if } s<1/2, \\
\| u - u_h \|_\V & \leq  C(\sigma) h^\frac12 |\ln h|  \|f\|_{L^\infty(\W)} , \mbox{ if } s = 1/2,  \\
\| u - u_h \|_\V & \leq  \frac{C(s,\beta,\sigma)}{2s-1} h^\frac12 \sqrt{|\ln h|}  \|f\|_{C^\beta(\W)} , \mbox{ if } s>1/2. 
\end{align*} 
\end{theorem}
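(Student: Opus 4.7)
The plan is to combine C\'ea's lemma with the Scott--Zhang interpolation bounds from Section \ref{sec:interpolation}, Faermann's localization, and the regularity results of Section \ref{sec:regularity}. By \eqref{eq:cea} it suffices to control $\|u-\Pi_h u\|_\V$. Since $u-\Pi_h u$ is supported in $\overline{\W}$, Corollary \ref{cor:norma} (equivalently, the forthcoming Lemma \ref{lemma:truco}) allows me to replace the $\V$-norm, which a priori involves integration over $\rn$, by $\|u-\Pi_h u\|_{H^s(\W)}$ when $s<1/2$ and by $|u-\Pi_h u|_{H^s(\W)}$ when $s>1/2$.

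Applying Faermann's Lemma \ref{faermann} to $v=u-\Pi_h u$ breaks the (semi)norm over $\W$ into a sum of patch contributions
$$\int_T \int_{S_T} \frac{|(u-\Pi_h u)(x)-(u-\Pi_h u)(y)|^2}{|x-y|^{n+2s}}\,dy\,dx + h_T^{-2s}\|u-\Pi_h u\|_{L^2(T)}^2.$$
The first term is bounded by the local estimate quoted at the start of Section \ref{sub:unif},
$$\int_T\int_{S_T}\frac{|(u-\Pi_h u)(x)-(u-\Pi_h u)(y)|^2}{|x-y|^{n+2s}}\,dy\,dx\leq C(n,s,\sigma)\,h_T^{2\ell-2s}\,|u|_{H^\ell(S_T)}^2,$$
with $\ell\in(s,1)$ when $s\leq 1/2$ and $\ell\in(1,2)$ when $s>1/2$; the $L^2$ piece is controlled by $C\,h_T^{2\ell-2s}|u|_{H^\ell(S_T)}^2$ by combining \eqref{eq:aproxciarlet} (or the Bramble--Hilbert-type inequalities \eqref{eq:estP}--\eqref{eq:estPs}) with the prefactor $h_T^{-2s}$. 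Summing over $T\in\T_h$ and using the finite-overlap property of the patches $\{S_T\}$, which follows from \eqref{eq:regularity}--\eqref{eq:uniformity}, I obtain the global bound
$$\|u-\Pi_h u\|_\V\leq C(n,s,\sigma)\,h^{\ell-s}\,|u|_{H^\ell(\W)},\qquad h=\max_T h_T.$$

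To close the argument I would set $\ell=s+\tfrac12-\eps$ and feed in the sharpest available regularity: Proposition \ref{prop:schico} for $s<1/2$, estimate \eqref{eq:reg12} for $s=1/2$, and Proposition \ref{proposition:adentro} for $s>1/2$; each supplies its own explicit $\eps$-dependent constant, yielding the three bounds in the statement. The quasi-optimal rates follow by optimizing over $\eps$: the choice $\eps=1/|\ln h|$ produces $h^{-\eps}=e$, turning $h^{1/2-\eps}/\eps$ into $e\,h^{1/2}|\ln h|$ and $h^{1/2-\eps}/\sqrt{\eps}$ into $\sqrt{e}\,h^{1/2}\sqrt{|\ln h|}$. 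The main delicacy is bookkeeping of how the constants blow up near the critical value $s=1/2$: the factor $(2s-1)^{-1}$ in the $s>1/2$ bound originates entirely in the regularity step, whereas the interpolation constants stay uniformly bounded on any compact subinterval of $(0,1)\setminus\{1/2\}$, so the factorization of the final constants is immediate.
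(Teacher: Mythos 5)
Your argument is correct and follows the paper's route essentially verbatim: C\'ea's lemma, Corollary \ref{cor:norma} to pass from $\|\cdot\|_\V$ to the $H^s(\W)$ (semi)norm, Faermann's localization (Lemma \ref{faermann}), the Scott--Zhang patch-wise estimates stemming from Proposition \ref{prop:casi} combined with the approximation properties \eqref{eq:aproxciarlet}, \eqref{eq:estP}--\eqref{eq:estPs}, the regularity results with $\ell=s+\tfrac12-\eps$, and the optimization $\eps=1/|\ln h|$. One small slip worth flagging: your parenthetical claim that Lemma \ref{lemma:truco} is ``equivalent'' to Corollary \ref{cor:norma} is incorrect --- the former is an exact identity $\|u-u_h\|_\V^2=\int_\W f(u-u_h)$ obtained from Galerkin orthogonality (hence specific to the discrete solution $u_h$, and useful only a posteriori for computing the error), whereas the step you actually need is the a priori comparison of $\|\cdot\|_\V$ with $H^s(\W)$ (semi)norms for arbitrary $v\in\V$, which is exactly what Corollary \ref{cor:norma} provides for $s\neq 1/2$; since you cite the right tool first, the argument is unaffected.
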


\subsection{Graded Meshes}
The obtained approximability property of $\Pi_h$  is enough to deal with standard fractional spaces. Nevertheless, for $1/2<s<1$ it is possible to improve the convergence rate using graded meshes. It requires dealing with the weights already introduced in Subsection \ref{sub:fws}. In order to get appropriate bounds in these spaces we should replace the classical Poincar\'e inequality of Proposition \ref{prop:poincareII} by an improved version in fractional spaces. The term  \emph{improved} in this context usually involves weights which are
powers of the distance to the boundary. On the other hand, in \cite[Theorem 3.1]{Ritva} we find  the following fractional improved Sobolev-Poincar\'e inequality for functions with zero average
\begin{equation}
\left( \int_\W |v(x)|^q dx \right)^{\frac{1}{q}} \leq C \left(\int_\W \int_{\W \cap B(x, \tau \delta(x))} \frac{|v(x)-v(y)|^p}{|x-y|^{n+\sigma p} } \, dy \, dx \right)^\frac{1}{p}.
\label{eq:improved}
\end{equation}
The parameters $\tau$ and $\sigma$ can be taken arbitrarily as long as  $\tau,\sigma \in (0,1)$, while  $1 < p \leq q \leq \frac{np}{n-\sigma p}$, $p < n/\sigma$. 
In \cite{Ritva} the domain $\W$ is assumed to belong to the class of John domains (for a definition and properties of this class see for instance \cite{Martio}); this class is much broader than the one of star-shaped domains.

Now we set $\tau = 1/2$ in \eqref{eq:improved}. For $\sigma$ to be chosen, we consider $p$ such that $\frac{np}{n-\sigma p} = 2 = q$. Observe that this election obviously implies that $p<2$, and therefore for all $\alpha \in \mathbb{R}$, applying H\"older's inequality with exponents $\frac2p$ and $\frac{2}{2-p}$,
$$\| v \|_{L^2(\W)} \leq C I_1^\frac{1}{2}I_2^\frac{2-p}{2p},$$
where
 $$I_1= \int_\W \int_{\W \cap B(x, \frac{\delta(x)}{2})} \frac{|v(x)-v(y)|^2}{|x-y|^{n+2s} } \, \delta(x,y)^\frac{2 \alpha}{p} dy \, dx ,$$
 and
$$ I_2=\int_\W \int_{\W \cap B(x, \frac{\delta(x)}{2})} |x-y|^{-n+ \frac{2p (s-\sigma)}{2-p}} \delta(x,y)^{-\frac{2\alpha}{2-p}} \, dy \, dx. $$
Since for every $x \in \W$ and $y \in B(x, \frac{\delta(x)}{2})$ it holds that $\delta(x,y) \in \left[\frac{\delta(x)}{2}, \delta(x) \right]$, assuming that $\sigma < s$ the second integral $I_2$ can be estimated as follows:

$$I_2 \leq \\
C \int_\W \left( \int_0^{\frac{\delta(x)}{2}} \rho^{-1 + \frac{2p (s-\sigma)}{2-p}} d\rho \right) \delta(x)^{-\frac{2\alpha}{2-p}} dx  
 \leq C \int_\W \delta(x)^{\frac{2p (s-\sigma) - 2\alpha }{2-p}} dx. $$

This is finite if and only if $\frac{2p (s-\sigma) - 2\alpha }{2-p} > -1$, and recalling the election of $p$ we made, it is enough to consider 
$$\alpha < \frac{2n(s-\sigma)+2\sigma}{n+2\sigma} .$$
Choosing $\alpha$ according to this restriction, we obtain that the weight in the term $I_1$ must be
$$ \frac{2 \alpha}{p} < 2 s - 2 \sigma \left( 1 - \frac{1}{n} \right) .$$
Therefore, taking $\sigma = \frac{\eps n}{2 (n-1)}$ for some $\eps > 0$ small enough, we obtain the Proposition \ref{prop:improved1} below. The result is stated here for an star-shaped domain $S$.  Actually, the constant $C$ in \eqref{eq:improved} depends on the constants associated to the John domain $\Omega$. In the case of a star-shaped domain the John constants are easily bounded in terms of the chunkiness parameter. Working with a domain of diameter one, a further scaling argument shows the final dependence on the diameter of $S$.
\begin{proposition}[Weighted fractional Poincar\'e inequality] 
\label{prop:improved1} Let $0<s<1$, $\alpha < s$
and $S$ a domain which is star-shaped w.r.t. a ball $B$. Then, there exists a constant $C$ such that for every $v \in L^2 (S)$ with $\int_S v = 0$, it holds
\begin{equation}
\| v \|_{L^2(S)} \leq C d_S^{s-\alpha}|v|_{H^s_{\alpha}(S)},
\label{eq:weightedpoincare}
\end{equation}
with a constant $C$ depending on the chunkiness parameter. 
\end{proposition}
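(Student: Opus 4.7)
The plan is to derive this weighted inequality from the (unweighted) improved fractional Sobolev-Poincar\'e inequality \eqref{eq:improved} of \cite{Ritva}, using a H\"older splitting to manufacture the weight $\delta(x,y)^{2\alpha}$ on the right-hand side. By a standard scaling argument it suffices to prove the estimate for $d_S=1$: $\|v\|_{L^2(S)}$ scales like $d_S^{n/2}$ while $|v|_{H^s_\alpha(S)}$ scales like $d_S^{n/2+\alpha-s}$, which produces precisely the factor $d_S^{s-\alpha}$ after rescaling. Moreover, a star-shaped domain is a John domain whose John constants are controlled by the chunkiness parameter, so the constant in \eqref{eq:improved} inherits this dependence.

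Normalize so that $d_S=1$. In \eqref{eq:improved} fix $\tau=1/2$ and pick an auxiliary $\sigma\in(0,s)$ together with $p=\frac{2n}{n+2\sigma}<2$, which makes $q=\frac{np}{n-\sigma p}=2$. With $\mu$ to be chosen, factor the integrand on the right-hand side of \eqref{eq:improved} as
\begin{equation*}
\frac{|v(x)-v(y)|^p}{|x-y|^{n+\sigma p}}
 = \left[\frac{|v(x)-v(y)|^p\,\delta(x,y)^{\mu}}{|x-y|^{n+\sigma p}}\right]\cdot\delta(x,y)^{-\mu},
\end{equation*}
and apply H\"older with conjugate exponents $2/p$ and $2/(2-p)$, obtaining
$\|v\|_{L^2(S)}\le C\, I_1^{1/2}\,I_2^{(2-p)/(2p)}$, where $I_1$ integrates $\frac{|v(x)-v(y)|^2}{|x-y|^{n+2s}}\,\delta(x,y)^{2\mu/p}$ and $I_2$ integrates $|x-y|^{-n+\frac{2p(s-\sigma)}{2-p}}\delta(x,y)^{-2\mu/(2-p)}$ over $\{y\in S\cap B(x,\delta(x)/2)\}$. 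Setting $\mu=p\alpha$ identifies the weight in $I_1$ with $\delta(x,y)^{2\alpha}$, so $I_1\le |v|_{H^s_\alpha(S)}^2$.

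The main obstacle is to bound $I_2$ by a constant independent of $v$ within the allowed range of $\alpha$. On the ball $B(x,\delta(x)/2)$ one has $\delta(x,y)\in[\delta(x)/2,\delta(x)]$, so after passing to polar coordinates the inner integral is of order $\delta(x)^{\frac{2p(s-\sigma)}{2-p}}$ (the exponent $\frac{2p(s-\sigma)}{2-p}$ being positive ensures the radial integral converges). Then the outer integral $\int_S \delta(x)^{\frac{2p(s-\sigma)-2\mu}{2-p}}\,dx$ is finite provided the exponent is strictly greater than $-1$; substituting $\mu=p\alpha$ and $p=\frac{2n}{n+2\sigma}$ this reduces to $\alpha<s-\sigma(1-1/n)$. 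Since $\sigma\in(0,s)$ is free, any $\alpha<s$ can be accommodated by choosing $\sigma$ sufficiently small. The delicate point is that the constants from \eqref{eq:improved} and from the radial integration both blow up as $\sigma\to 0$, so the final constant $C$ depends on the gap $s-\alpha$ as well as on the chunkiness parameter. Combining the estimates for $I_1$ and $I_2$ yields the proposition.
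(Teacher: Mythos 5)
Your proof follows the paper's argument step for step: take \eqref{eq:improved} with $q=2$, $\tau=1/2$, choose $p=\frac{2n}{n+2\sigma}<2$ so that $\frac{np}{n-\sigma p}=2$, split via H\"older with exponents $2/p$ and $2/(2-p)$ into a weighted seminorm $I_1$ and a geometric integral $I_2$, estimate $I_2$ using $\delta(x,y)\sim\delta(x)$ on $B(x,\delta(x)/2)$ together with polar coordinates, and send $\sigma\to 0$ to allow any $\alpha<s$. The reparametrization $\mu=p\alpha$, the explicit scaling reduction to $d_S=1$, and the remark that the constant degenerates as $\alpha\to s$ are sensible clarifications of what the paper leaves implicit.

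One slip: the displayed factorization
\begin{equation*}
\frac{|v(x)-v(y)|^p}{|x-y|^{n+\sigma p}}
 = \left[\frac{|v(x)-v(y)|^p\,\delta(x,y)^{\mu}}{|x-y|^{n+\sigma p}}\right]\cdot\delta(x,y)^{-\mu}
\end{equation*}
does not produce the $I_1$ you subsequently claim. Raising the first bracket to the power $2/p$ gives $|x-y|^{-2(n+\sigma p)/p}=|x-y|^{-(n+4\sigma)}$ in the denominator, not $|x-y|^{-(n+2s)}$; these agree only if $2\sigma=s$, which you cannot afford when taking $\sigma\to 0$. The split must also redistribute the $|x-y|$ power: take instead
\begin{equation*}
\frac{|v(x)-v(y)|^p}{|x-y|^{n+\sigma p}}
 = \left[\frac{|v(x)-v(y)|^p\,\delta(x,y)^{\mu}}{|x-y|^{(n+2s)p/2}}\right]\cdot
 \left[\frac{|x-y|^{(n+2s)p/2-n-\sigma p}}{\delta(x,y)^{\mu}}\right],
\end{equation*}
whose two factors, raised respectively to $2/p$ and $2/(2-p)$, give precisely the $I_1$ and $I_2$ you wrote (and one checks that the $|x-y|$ exponent in $I_2$ equals $-n+\frac{2p(s-\sigma)}{2-p}$). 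With this correction the computation yielding $\alpha<s-\sigma(1-1/n)$ and the rest of the argument are fine.
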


\begin{remark}
The previous result as stated suffices to fulfill our needs. Nevertheless, from the 
standard version with $s=1$ one might expect \eqref{eq:weightedpoincare} to hold even if $\alpha=s$. This is indeed the case, however we were unable to produce a proof as short as the one given here for $\alpha<s$. 
\end{remark}

Now we want to exploit the previous proposition together with
the a priori estimate of Proposition \ref{prop:sgrandepesos}. Since the weights under consideration vanish only on the boundary of the domain we 
need to rely on \eqref{eq:weightedpoincare} just  for patches $S_T$ touching $\partial\W$. Actually, for them we obtain the following improved version of \eqref{eq:estP}, derived using Proposition \ref{prop:improved1} instead of Proposition \ref{prop:poincareII}
\begin{equation*}
 \|u-\mathcal{P}_1(u)\|_{L^2(S_T)}+h_T|u-\mathcal{P}_1(u)|_{H^1(S_T)} \le Ch_T^{\ell-\alpha}|u|_{H_\alpha^\ell(S_T)},
\end{equation*}
where $1<\ell<2$ and $\alpha<\ell-1$. Taking
$1/2<s<1$, $\ell=1+s-2\eps$, and $\alpha=1/2-\varepsilon$
we obtain the analogous of \eqref{eq:estPs}
\begin{equation*}
|u-\mathcal{P}_1(u)|_{H^s(S_T)}\le Ch_T^{1/2-\varepsilon}|u|_{H^{1+s-2\eps}_{1/2-\varepsilon}(S_T)}.
\end{equation*}

In particular, this property of $\mathcal{P}_1$
and the stability estimates  (see (b) in Proposition \ref{prop:casi}) yield
$$\int_T \int_{S_T} \frac{|(v-\Pi_h v) (x) - (v-\Pi_h v) (y)|^2}{|x-y|^{n+2s}} \, dy \, dx \leq C(n,s,\sigma) h_T^{1-2\varepsilon} |v|_{H^{1+s-2\eps}_{1/2-\varepsilon}(S_T)}^2.$$ 

This approximability property is particularly useful for patches $S_T$ touching the boundary of $\W$. For these it must be recalled that 
$dist(x,\partial S_T)\le dist(x,\partial \W)$ for $x\in S_T$. 

The following is standard  (see \cite[Section 8.4]{Grisvard}). We assume that, in addition to \eqref{eq:regularity} and \eqref{eq:uniformity}
our meshes enjoy some extra properties, denoted below with $(H)$. First, we pick  an arbitrary mesh size parameter $0<h$ and 
 define, for $\varepsilon$ small enough, a number $1\le \mu=2/(1+2\varepsilon)<2$. 
 
Property $(H)$:  assume that
for any $T\in \mathcal{T}_h$
\begin{itemize}
 \item If $T\cap \partial \W\neq \emptyset$,
 then $h_T\le C h^{\mu}$
 \item Otherwise $h_T\le C h \,dist(T,\partial \Omega)^{(\mu-1)/\mu} $
\end{itemize}
Using the estimates for $\Pi_h$ given in Subsection \ref{sub:unif} when  $S_T\cap \partial \W=\emptyset$ and the a priori estimate Proposition \ref{prop:sgrandepesos}, we can conclude, for graded meshes obeying $(H)$ and $1/2<s<1$, that 
$$
\| u - u_h \|_\V  \leq  \frac{C(s,\beta,\sigma)}{2s-1} h \sqrt{|\ln h|}  \|f\|_{C^{1-s}(\W)}. 
$$
If the mesh parameter $h$ can be appropriately related to the number $N$ of nodes of the mesh then it is possible to obtain quasi-optimal order of convergence.

\begin{theorem}\label{teo:esterrorgrad}
Let $1/2<s<1$ and assume that the FE triangulation $\mathcal{T}_h$ satisfies conditions \eqref{eq:regularity}, \eqref{eq:uniformity} as well as the grading 
hypotheses $(H)$. If the mesh parameter $h$ behaves like $h\sim \frac{1}{N^{1/n}}$, $N$ being the number of mesh nodes, then for the solution $u$ of \eqref{eq:cont} and its FE approximation $u_h$ given by \eqref{eq:disc} we have the following a quasi-optimal a priori estimate
 $$
\| u - u_h \|_\V  \leq  \frac{C(s,\beta,\sigma)}{2s-1} N^{-1/n} \sqrt{|\ln N|}  \|f\|_{C^{1-s}(\W)}. 
$$
\end{theorem}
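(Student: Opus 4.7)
The plan is to split the argument into two essentially independent steps. First, I would observe that the $h$-rate estimate
\begin{equation*}
\|u-u_h\|_\V \leq \frac{C(s,\beta,\sigma)}{2s-1}\, h\sqrt{|\ln h|}\, \|f\|_{C^{1-s}(\W)}
\end{equation*}
has already been established in the paragraph immediately preceding the statement, by combining C\'ea's lemma with the weighted approximation property of $\Pi_h$ on patches meeting $\partial\W$ (which uses Proposition \ref{prop:sgrandepesos} and the weighted Poincar\'e inequality of Proposition \ref{prop:improved1}), the unweighted estimate of Subsection \ref{sub:unif} on interior patches, and the optimal choice $\eps=|\ln h|^{-1}$. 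From there, the conclusion will follow immediately upon substituting $h\sim N^{-1/n}$, since $|\ln h|\sim\tfrac{1}{n}|\ln N|$.

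The substantive content is therefore the realizability of $h\sim N^{-1/n}$ under $(H)$ with $\mu=2/(1+2\eps)$, which I would verify by a dyadic counting argument. I would define the boundary layer $\W_0=\{x\in\W:\delta(x)\le h^\mu\}$ and the shells $\W_k=\{x\in\W:2^{k-1}h^\mu\le\delta(x)\le 2^kh^\mu\}$ for $k=1,\ldots,k_{\max}$, with $k_{\max}\sim\mu|\ln h|$ chosen so that $2^{k_{\max}}h^\mu\sim 1$. In each shell, the volume is $\sim 2^kh^\mu$, while $(H)$ constrains any element there to have diameter $\lesssim h(2^kh^\mu)^{(\mu-1)/\mu}=h^\mu\,2^{k(\mu-1)/\mu}$; dividing volume by element volume gives $\sim h^{\mu(1-n)}2^{k\gamma}$ elements in $\W_k$, with $\gamma=1-n(\mu-1)/\mu$. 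Summing the resulting geometric series together with the $\sim h^{\mu(1-n)}$ elements inside $\W_0$ yields $N\lesssim h^{-n}$, provided $\gamma>0$; the reverse inequality $N\gtrsim h^{-n}$ is immediate from the existence of at least one element of diameter $\sim h$.

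The main obstacle I anticipate is precisely the condition $\gamma>0$, equivalently $\mu<n/(n-1)$. In the two-dimensional setting (the main target of the numerical sections of this paper) this reduces to $\mu<2$, which is automatic from $\mu=2/(1+2\eps)<2$ for every $\eps>0$; in higher dimensions, the admissible range of $\eps$ would have to be restricted accordingly. The design principle behind the grading is that $\mu$ is taken as large as possible so as to concentrate mesh refinement near $\partial\W$, where the weighted regularity of Proposition \ref{prop:sgrandepesos} degenerates, while still remaining small enough that the total node count is of the same order as that of a quasi-uniform mesh of size $h$.
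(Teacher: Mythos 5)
Your reading of the theorem's proof is correct, and the first step -- C\'ea plus the graded interpolation bound from the preceding paragraph, then the substitution $h\sim N^{-1/n}$ with $|\ln h|\sim\frac{1}{n}|\ln N|$ -- is exactly what the paper does. One small point of framing: the paper takes $h\sim N^{-1/n}$ as a \emph{hypothesis} of the theorem (note the ``if'' in the statement), so the dyadic counting argument you develop is not part of the proof of the theorem per se; the paper defers the verification of this hypothesis to a remark and to the explicit 2D construction on the disk in Section~\ref{sec:implementation}, where it is checked that $N\sim M^2=h^{-2}$ when $\mu<2$.

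That said, the extra work you supply is correct and adds something the paper leaves implicit. Your shell-counting computation recovers $N\lesssim h^{-n}$ under $(H)$ precisely when $\gamma=1-n(\mu-1)/\mu>0$, i.e.\ $\mu<n/(n-1)$, which in two dimensions reads $\mu<2$ and is automatic from $\mu=2/(1+2\eps)$. You are also right to flag the dimensional caveat: for $n\ge 3$ the constraint becomes $\eps>(n-2)/(2n)$, which conflicts with the paper's eventual choice $\eps=|\ln h|^{-1}\to 0$. The paper does not comment on this because its graded-mesh construction and numerics are confined to $n=2$; your observation makes explicit that the stated $N^{-1/n}$ rate under $(H)$ with the given $\mu$ is genuinely a two-dimensional result, and that in higher dimensions one would have to re-optimize the interplay between $\eps$, $\mu$, and the node count.
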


\begin{remark}
 In the next section we show a concrete 2D example in which meshes of the kind required in previous theorem are constructed.
\end{remark}

\section{Implementation details and results}\label{sec:implementation}
Numerical computation of solutions of \eqref{eq:fraccionario} has as main difficulties the fact that a singular kernel is involved, and that integrals over the whole $\rn$ must be calculated.  

Now we will comment some features of the implementation, more details can be 
found in \cite{ABB}. Let $\{ \phii_i \}$ be the nodal basis of $\V_h$. Due to the linearity of the fractional Laplacian, we just need to solve a system $KU = F$, where the right hand side vector $F = (f_i)$ is assembled straightforwardly because
$$f_i = \int_\W \phii_i(x) f(x) \, dx . $$
The challenging task is to  accurately compute the stiffness matrix $K=(K_{ij})$, given by
$$K_{ij} = \iint_Q \frac{(\phii_i(x)-\phii_i(y))(\phii_j(x)-\phii_j(y))}{|x-y|^{n+2s}} dx \, dy .$$
Splitting $Q = (\W \times \W) \cup (\W \times \W^c) \cup (\W^c \times \W)$ and taking into account that the interactions in $\W \times \W^c$ and $\W^c \times \W$ are symmetric respect to $x$ and $y$, we get
\begin{equation*}\begin{split}
K_{ij} = & \iint_{\W \times \W} \frac{(\phii_i(x)-\phii_i(y))(\phii_j(x)-\phii_j(y))}{|x-y|^{n+2s}} dx \, dy \, + \\ 
& + 2 \iint_{\W \times \W^c} \frac{\phii_i(x)\phii_j(x)}{|x-y|^{n+2s}} dx \, dy. 
\end{split}\end{equation*}

By making a double loop over the elements of the triangulation, the integrals above can be computed. The quadrature rules employed for computing the integrals over two  elements $T$ and $T'$ (with the possibility that $T = T'$) are analogous to the ones presented in Chapter 5 of \cite{SauterSchwab}. The advantage of applying the transformations presented in that book for this problem is that they convert an integral over the product of two elements into an integral over $[0,1]^4$, in which variables can be separated and the singular part can be solved analytically. The integral involving $\W \times \W^c$ is computed resorting to integration in polar coordinates, taking into account that it will be nonzero only if $\mbox{supp} (\phii_i) \cap \mbox{supp} (\phii_j) \neq \emptyset $.

\subsection{Numerical Results for Uniform Meshes} In first place, numerical solutions of problem \eqref{eq:ejemplo} were obtained for $n=2$, $x_0=0$ and $r=1$, and for several values of $s$. The computation of the error in the $\V$ norm is easily achieved by using the following. 
\begin{lemma} \label{lemma:truco} It holds
$$ \| u - u_h \|_\V = \left(\int_\W f(x) (u(x)-u_h(x)) \ dx\right)^\frac{1}{2}. $$
\end{lemma}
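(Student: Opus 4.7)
The plan is to exploit the symmetry of the bilinear form $a$ defined in \eqref{eq:bilineal}, together with Galerkin orthogonality, in the standard way one handles energy-norm errors for symmetric coercive problems.

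First I would recall that by construction $\|w\|_\V^2 = a(w,w)$ for all $w \in \V$, so in particular, since $u \in \V$ and $u_h \in \V_h \subset \V$, the difference $u - u_h$ lies in $\V$ and
\[
\|u - u_h\|_\V^2 \;=\; a(u-u_h,\, u-u_h) \;=\; a(u-u_h,\, u) \;-\; a(u-u_h,\, u_h).
\]
The second term vanishes by Galerkin orthogonality: subtracting the discrete problem \eqref{eq:disc} from the continuous one \eqref{eq:cont} and testing with $v_h = u_h \in \V_h$ gives $a(u-u_h, u_h) = 0$.

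Next I would use the symmetry of $a$ (which is immediate from its defining formula \eqref{eq:bilineal}) to rewrite $a(u - u_h, u) = a(u, u - u_h)$, and then apply the continuous weak formulation \eqref{eq:cont} with the admissible test function $v = u - u_h \in \V$, obtaining
\[
a(u, u - u_h) \;=\; \int_\W f(x)\,(u(x) - u_h(x))\,dx.
\]
Combining these identities yields $\|u-u_h\|_\V^2 = \int_\W f(u-u_h)\,dx$, and taking square roots concludes the proof.

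There is essentially no obstacle: the only facts used are symmetry of $a$, the relation $\|\cdot\|_\V^2 = a(\cdot,\cdot)$ established after Proposition \ref{prop:poincare}, Galerkin orthogonality, and admissibility of $u-u_h$ as a test function in \eqref{eq:cont}. The sign of the right-hand side is automatically nonnegative because it equals $\|u-u_h\|_\V^2$, so the square root is well defined.
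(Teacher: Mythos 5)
Your proof is correct and follows the same route as the paper: expand $\|u-u_h\|_\V^2 = a(u-u_h,u-u_h)$, kill the $u_h$ term by Galerkin orthogonality, and then invoke \eqref{eq:cont} (together with symmetry of $a$) to replace $a(u,u-u_h)$ by $\int_\W f(u-u_h)$. The paper's version is merely more terse, stating orthogonality and the identity $a(u-u_h,u-u_h)=a(u,u-u_h)$ without spelling out the bilinearity and symmetry steps that you correctly make explicit.
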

\begin{proof}
It is an immediate consequence of the orthogonality condition 
$$a(v_h,u-u_h) = 0 \quad \forall v_h \in \V_h .$$
Indeed, from it we obtain
$$\| u - u_h \|_\V^2 = a(u-u_h,u-u_h) = a(u,u-u_h),$$
and the equality follows by \eqref{eq:cont} and \eqref{eq:bilineal}. \qquad
\end{proof}

Although the computation involved in this lemma is subtle in general, in this particular case it
can be carried out exactly since $f \equiv 1$ on $\W$ and a closed formula for $\int_\W u$ is easy to get while the exact value of $\int_\W u_h$ can be numerically evaluated.

Several orders are shown in Table \ref{tab:ejemplo}; these results are in accordance with the estimates in Theorem \ref{teo:esterror}. In Figure \ref{fig:ejemplo} computational errors for $s=0.5$ and $s=0.7$ are shown.
 
\begin{table}[htbp]\footnotesize\centering
		\begin{tabular}{|c| c|} \hline
		Value of $s$ & Order (in $h$) \\ \hline
		0.1 & 0.497 \\
		0.2 & 0.496 \\
		0.3 & 0.498 \\
		0.4 & 0.500 \\
		0.5 & 0.501 \\
		0.6 & 0.505 \\
		0.7 & 0.504 \\
		0.8 & 0.503 \\
		0.9 & 0.532 \\
 \hline	 
\end{tabular}\label{tab:ejemplo}
\caption{(Uniform Meshes) Computational rates of convergence for problem \eqref{eq:ejemplo}, measured in the norm $\| \cdot \|_\V$. The mesh parameter is the actual size of the elements.}
\end{table}

\begin{figure}[ht]
	\centering
	\includegraphics[width=0.75\textwidth]{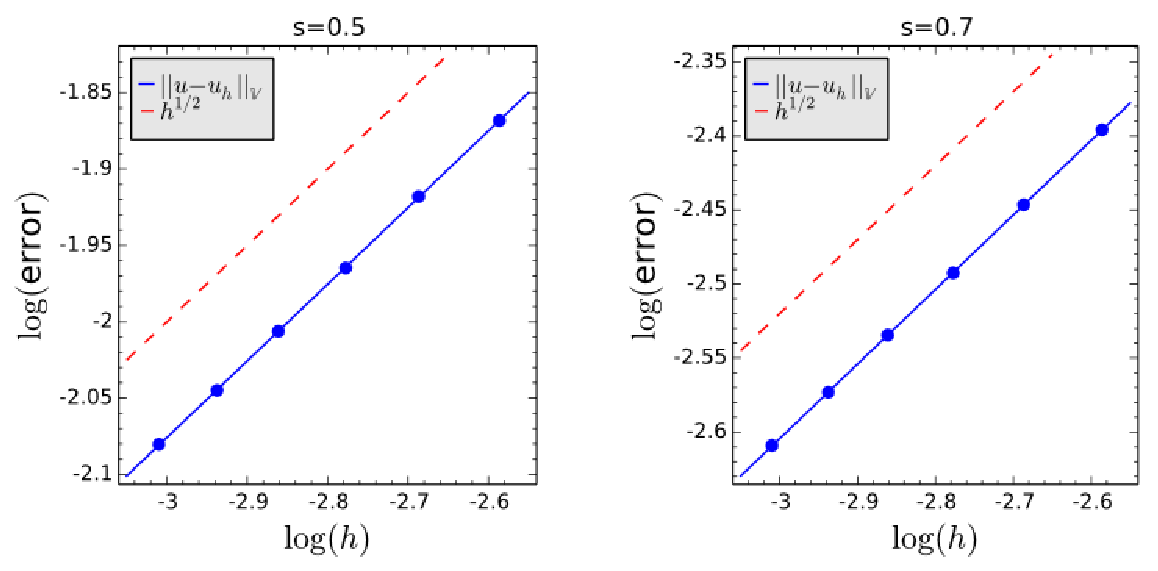}
	\label{fig:ejemplo}
	\caption{Computational results for problem \eqref{eq:ejemplo} using uniform meshes. The left panel shows the rate for $s = 0.5$ and the right one for $s = 0.7$. In both cases, the rate is $\approx 0.5$, as predicted by Theorem \ref{teo:esterror}.}
\end{figure}

As a second example, take $s > 1/2$ and consider problem \eqref{eq:fraccionario} posed on the interval $\W = (-1,1)$, with exact solution $u(x) = \sin(\pi x) \chi_{(-1,1)}(x) $, namely:
\begin{equation}
 \left\lbrace
  \begin{array}{c l}
      (-\Delta)^s u = (-\Delta)^s \sin(\pi x) & \mbox{ in } (-1,1) \\
      u = 0 & \mbox{ in } (-\infty,-1)\cup(1,\infty) .\\
      \end{array}
    \right.
\label{eq:ejemplo2}
\end{equation}
Since the solution for this problem is smooth in $(-1,1)$, the convergence in the energy norm would be expected to be of order $2-s$. Some results are shown in Table \ref{tab:ejemplo2}, where it can be seen that these orders are indeed achieved.

 \begin{table}[ht] \footnotesize\centering
\begin{tabular}{|c| c|} \hline
		Value of $s$ & Order (in $h$) \\ \hline
		0.6 & 1.4028 \\
		0.7 & 1.2993 \\
		0.8 & 1.2002 \\
		0.9 & 1.1002 \\
 \hline	 
\end{tabular}
\caption{Rates of convergence for uniform meshes in the norm $\| \cdot \|_\V$ for problem \eqref{eq:ejemplo2} and $s > 1/2$.}
\label{tab:ejemplo2}
\end{table}

\subsection{Numerical Results for Graded Meshes}
For the same $2D$ problem of the first example we show how to build appropriate graded meshes.
Our domain $\W$ is the unitary disk. Therefore, we may pick a positive integer $M$ and define an increasing sequence of radii $r_i:=1-\left(1-\frac{i}{M} \right)^{\mu}$ for $1\le i \le M$. We can mesh the complete disk $\Omega$ by meshing each subdomain $\W_i=\{x\in \W:r_{i-1}<|x|< r_i\}$ with uniform elements of size $h_T=h_i=r_i-r_{i-1}$ (see Figure \ref{fig:ejemplograd}). proceeding in that way it is possible to compute the final number of nodes $N\sim \sum_{i=1}^{M}1/h_i$. It is a simple exercise to check that \emph{if} $\mu<2$ then $N\sim M^2$.
The previous construction ensures that  conditions \eqref{eq:regularity}, \eqref{eq:uniformity} 
and hypotheses $(H)$ hold, taking  $h=1/M$. 

Table \ref{tab:ejemplo3} shows numerical results for this case. The accuracy is in full agreement with that predicted in Theorem \ref{teo:esterrorgrad}. 
\begin{figure}[ht]
	\centering
	\includegraphics[width=0.4\textwidth]{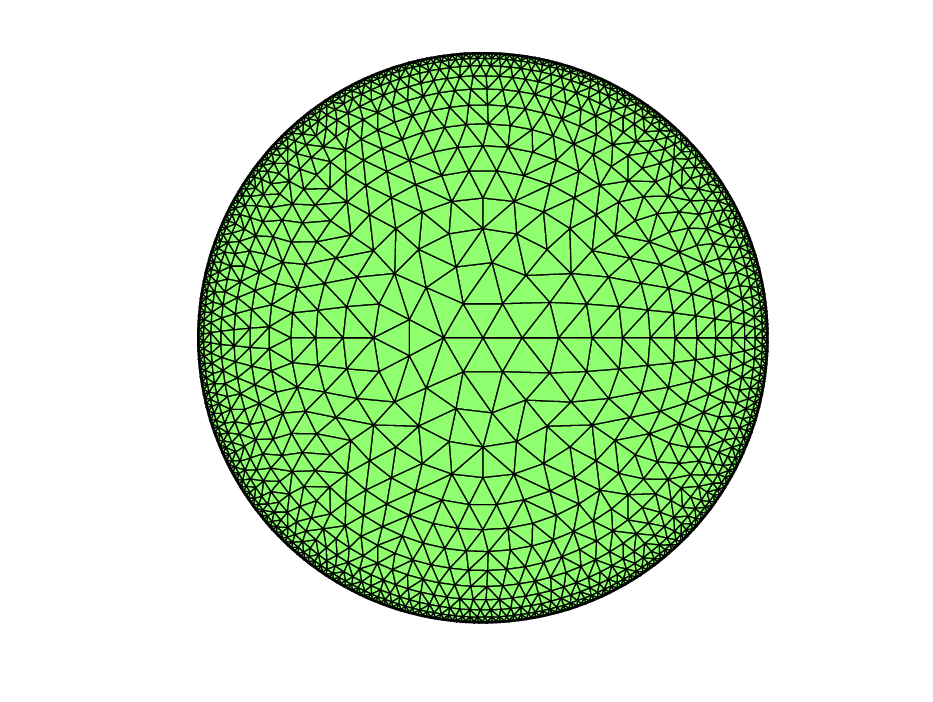} 
	\includegraphics[width=0.4\textwidth]{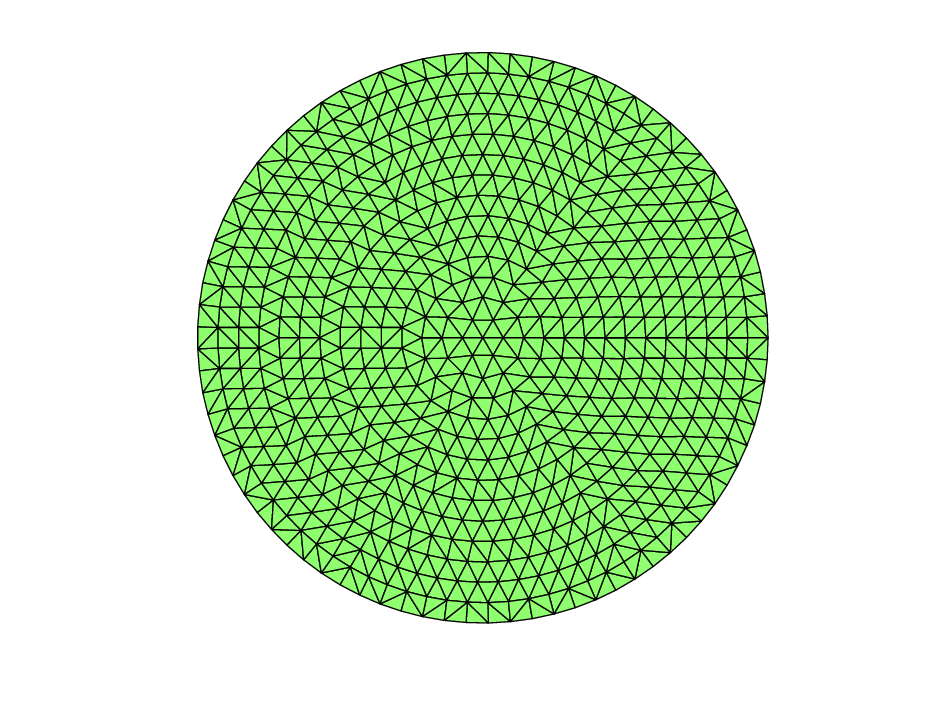} 
	\label{fig:ejemplograd}
	\caption{Left: graded mesh with $M=15$ and $\mu=2-\varepsilon$. Right: uniform mesh with $M=15$ and $\mu =1$.}
\end{figure}

\begin{table}[h] \footnotesize\centering
\begin{tabular}{|c| c|} \hline
		Value of $s$ & Order (in $h$) \\ \hline
		0.5 & 1.066 \\
		0.6 & 1.051 \\
		0.7 & 0.990 \\
		0.8 & 0.985 \\
		0.9 & 0.977 \\
 \hline	 
\end{tabular}
\caption{(Graded Meshes) Rates of convergence in the norm $\| \cdot \|_\V$ for problem \eqref{eq:ejemplo} and $s \geq 1/2$. The mesh parameter $h$ behaves like $N^{-1/2}$, $N$ being the number of nodes.}
\label{tab:ejemplo3}
\end{table}
\begin{remark}\label{rem:mallas}
Taking into account the restrictions \eqref{eq:cotasReg}, it is possible to achieve differentiability orders between $1/2+s< \ell < 2$ by choosing adequate weights. At this point, the reader might ask whether the order of convergence (with respect to $N$) could be improved by considering a different value of $\ell$ and following the grading approach presented at the beginning of this subsection. This is not the case; actually the choice we made yields the best possible order w.r.t. the number of nodes with minimum grading requirements on the mesh.

Indeed, it is simple to check that, for a given regularity $\ell$,  the optimal choice for the grading parameter is $\mu = 2 (\ell - s)$. Recall identity \eqref{eq:conysinpeso} and the results shown in Section \ref{ss:fem}, which give
$$\| u - u_h \|_\V \leq C h^{\ell - s} |u|_{H^{\ell}_{\alpha}(\W)} , $$
where $h$ is the mesh parameter.

If we restrict to $\ell < 1 + s$, then $\mu < 2$ and the number of nodes is $N \sim h^{-2}$. Therefore, as $\ell$ increases there is a gain of order without an increment in the total number of nodes and the error behaves like $N^{-(\ell - s)/2}$. Within this range, the choice $\ell = 1 + s - \eps$ is optimal.

On the other hand, if we consider $\ell > 1 + s$ then $\mu > 2$ and it is simple to check that in this case $N \sim h^{-\mu}$. Here the gain of order one might expect due to the increase in differentiability is compensated by the cost of having to increase the weight power, which implies a growth in the number of nodes. In the whole range $\ell \in (1+s, 2)$ we obtain that the error behaves like $N^{-1/2}  \ln N$.
\end{remark}

\section{Conclusion} In this paper, a complete Finite Element study of  a fractional Laplace equation is carried out. First it is shown that recent H\"older regularity results for this problem  \cite{RosOtonSerra} can be used to provide a priori estimates in weighted fractional Sobolev spaces, within which the FE setting can be straightforwardly adapted. In particular, some of these estimates measure in a precise way the singular behavior of solutions near the boundary. Borrowing techniques from the BEM it was found that the singular kernel arising in this problem can be accurately handled.  The FE method is implemented in one and two dimensions, where uniform as well as tailored graded meshes are proposed. Error estimates for the Scott-Zhang interpolation operator in fractional weighted spaces are obtained by introducing an appropriate version of the improved Poincar\'e inequality.  These error estimates are used to prove optimal order of convergence of the FE method in the weighted fractional context. Numerical experiments are presented  delivering orders of convergence in full agreement with our theoretical predictions.

\bibliography{bib}
\bibliographystyle{siam}
\end{document}